\newtheorem{lemma}{Lemma}
\newtheorem{proposition}{Proposition}
\let\old@ssect\@ssect 
\def\@ssect#1#2#3#4#5#6{%
  \NR@gettitle{#6}
  \old@ssect{#1}{#2}{#3}{#4}{#5}{#6}
}
\newcommand{\grad}{\normalfont{\text{grad}}}
\newcommand{\R}[1][\empty]{\mathbb{R}^{#1}}
\DeclareRobustCommand{\qed}{%
  \ifmmode 
  \else \leavevmode\unskip\penalty9999 \hbox{}\nobreak\hfill
  \fi
  \quad\hbox{\qedsymbol}}
\newcommand{\openbox}{\leavevmode
  \hbox to.77778em{%
  \hfil\vrule
  \vbox to.675em{\hrule width.6em\vfil\hrule}%
  \vrule\hfil}}
\newcommand{\qedsymbol}{\openbox}
\newenvironment{proof}[1][\proofname]{\par
  \normalfont
  \topsep6\p@\@plus6\p@ \trivlist
  \item[\hskip\labelsep\itshape
    #1.]\ignorespaces
}{%
  \qed\endtrivlist
}
\newcommand{\proofname}{Proof}
\newcommand{\so}{\mathfrak{so}}
\newcommand{\g}{\mathfrak{g}}
\newcommand{\Exp}{\normalfont{\text{Exp}}}
\newcommand{\Log}{\normalfont{\text{Log}}}
\newcommand{\ext}{\normalfont{\text{ext}}}
\newcommand{\Bi}{\normalfont{\text{Bi}}}
\newcommand{\N}{\mathbb{N}}
\newcommand{\tr}{\normalfont{\text{tr}}}
\newcommand{\SO}{\normalfont{\text{SO}}}
\newcommand{\ad}{\normalfont{\text{ad}}}
\newcommand{\Ad}{\normalfont{\text{Ad}}}
\newcommand{\coll}{\normalfont{\text{coll}}}
\def\widebreve{\mathpalette\wide@breve}
\def\wide@breve#1#2{\sbox\z@{$#1#2$}%
     \mathop{\vbox{\m@th\ialign{##\crcr
\kern0.08em\brevefill#1{0.8\wd\z@}\crcr\noalign{\nointerlineskip}%
                    $\hss#1#2\hss$\crcr}}}\limits}
\def\brevefill#1#2{$\m@th\sbox\tw@{$#1($}%
  \hss\resizebox{#2}{\wd\tw@}{\rotatebox[origin=c]{90}{\upshape(}}\hss$}
\DeclarePairedDelimiterX\set[1]\lbrace\rbrace{#1}
\newtheorem{corollary}{Corollary}
\theoremstyle{definition}
\newtheorem{remark}{Remark}
\newtheorem{example}{Example}
\begin{document}
\begin{frontmatter}

\title{Reduction of Necessary Conditions for the Variational Collision Avoidance Problem\thanksref{footnoteinfo}} 

\thanks[footnoteinfo]{The authors acknowledge financial support from Grant PID2022-137909NB-C21 funded by MCIN/AEI/ 10.13039/501100011033.}

\author[First]{Jacob R. Goodman} 
\author[Second]{Leonardo J. Colombo} 

\address[First]{J. Goodman is with Antonio de Nebrija University, Departamento de Informática, Escuela Politécnica Superior, C. de Sta. Cruz de Marcenado, 27, 28015, Madrid, Spain. email: jacob.goodman@nebrija.es}
   \address[Second]{L.Colombo is with Centre for Automation and Robotics (CSIC-UPM), Ctra. M300 Campo Real, Km 0,200, Arganda
del Rey - 28500 Madrid, Spain. email:  leonardo.colombo@csic.es}

\begin{abstract}                
In this work we study the reduction by a Lie group of symmetries of variational collision avoidance probelms of multiple agents evolving on a Riemannian manifold and derive necessary conditions for the reduced extremals. The problem consists of finding non-intersecting trajectories of a given number of agents, among a set of admissible curves, to reach a specified configuration, based on minimizing an energy functional that depends on the velocity, covariant acceleration and an artificial potential function used to prevent collision among the agents.
\end{abstract}

\begin{keyword} Variational problems on Riemannian Manifolds, Collision avoidance, Potential functions, Reduction by symmetries.
\end{keyword}

\end{frontmatter}

\section{Introduction}
Dimensionality reduction for large scale systems has become an active problem of interest within the automatic control and robotics communities. In multi-agent systems, guidance and trajectory planning algorithms for coordination while optimizing qualitative features for the system of multiple robots are determined by solutions of nonlinear equations which demand a high-computational costs along its integration. The construction of methods for the reduction of dimensionality permits fast computations for the generation of optimal trajectories in the collision avoidance motion of multi-agent systems.

Methods for trajectory tracking and estimation algorithms for pose and attitude of mechanical systems evolving on Lie groups are commonly employed for improving accuracy on simulations, as well as to avoid singularities by working with coordinate-free expressions  in the associated Lie algebra of the Lie group to describe behaviors in multi-agent systems (i.e., a set of equations depending on an arbitrary choice of the basis for the Lie algebra). More recently, this framework has been used for cooperative transportation \cite{goodman2022geometric}, \cite{goodman2023geometric}. 

Optimization problems on Lie groups have a long history \cite{jurdjevic1997geometric} and have been applied to many problems in control engineering. In practice, many robotic systems exhibit symmetries that can be exploited to reduce some of the complexities of system
models, for instance degrees of freedom. Symmetries in optimal control for systems on Lie groups have been studied in \cite{Bl}, \cite{krishnaprasad1993optimal}, \cite{koon1997optimal}, \cite{grizzle1985structure} among many others, mainly for applications in robotic and aerospace engineering, and in particular, for spacecraft attitude control and underwater vehicles \cite{leonard1995motion}. While most of the applications of symmetry reduction provided in the literature focus on the single agent situation, only a few works studied the relation between multi-agent systems and symmetry reduction (see for instance the early work on the topic \cite{justh2004equilibria}), in this work we consider symmetry reduction of multi-agent systems in the necessary conditions for optimality obtained via a variational problem on Lie groups, with a decentralized communication topology determined by an undirected graph, i.e., the information between the agents is only shared between nearest neighbors.

Riemannian polynomials are smooth and optimal in the sense that they minimize the average square magnitude of some higher-order derivative along the curve. This quantity is often related to the magnitude of the controller in control engineering applications (which itself is related to energy consumption). Moreover, Riemannian polynomials carry a rich geometry with them, which has been studied extensively in the literature (see \cite{Giambo, marg, noakes} for a detailed account of Riemannian cubics and \cite{RiemannianPoly} for some results with higher-order Riemannian polynomials).

It is often the case that—in addition to interpolating points—there are obstacles or regions in space that need to be avoided. In this case, a typical strategy is to augment the action functional with an artificial potential term that grows large near the obstacles and small away from them (in that sense, the trajectories that minimize the action are expected to avoid the obstacles). This was done for instance in \cite{BlCaCoCDC}, \cite{BlCaCoIJC}, \cite{CoGo20}, \cite{colombo2023existence} where necessary conditions for extrema in obstacle avoidance problems on Riemannian manifolds were derived. In addition to applications to interpolation problems on manifolds and to energy-minimum problems on Lie groups and symmetric spaces endowed with a bi-invariant metric \cite{point}, and extended in \cite{mishal}, \cite{sh} and \cite{goodman2022collision} for the collision avoidance task and hybrid systems in \cite{goodman2021obstacle}. Reduction of necessary conditions for the obstacle avoidance problem were studied in \cite{goodman2022reduction} and sufficient conditions for the problem were studied in \cite{goodman2022sufficient}. In this paper, we build on the previous studies by considering the problem of reduction by a Lie group of symmetries necessary conditions for optimality in the variational collision avoidance problem on Lie groups endowed with a left-invariant metric. Finally, a brief study of the reduction by symmetries of the collision avoidance problem in the case of bi-invariant metrics is considered.

\section{Background on Riemannian manifolds and Global analysis}\label{Sec: background}
\subsection{Background on Riemannian manifolds}
Let $(Q, \left< \cdot, \cdot\right>)$ be an $n$-dimensional \textit{Riemannian
manifold}, where $Q$ is an $n$-dimensional smooth manifold and $\left< \cdot, \cdot \right>$ is a positive-definite symmetric covariant 2-tensor field called the \textit{Riemannian metric}. That is, to each point $q\in Q$ we assign a positive-definite inner product $\left<\cdot, \cdot\right>_q:T_qQ\times T_qQ\to\mathbb{R}$, where $T_qQ$ is the \textit{tangent space} of $Q$ at $q$ and $\left<\cdot, \cdot\right>_q$ varies smoothly with respect to $q$. The length of a tangent vector is determined by its norm, defined by
$\|v_q\|=\left<v_q,v_q\right>^{1/2}$ with $v_q\in T_qQ$. For any $p \in Q$, the Riemannian metric induces an invertible map $\cdot^\flat: T_p Q \to T_p^\ast Q$, called the \textit{flat map}, defined by $X^\flat(Y) = \left<X, Y\right>$ for all $X, Y \in T_p Q$. The inverse map $\cdot^\sharp: T_p^\ast Q \to T_p Q$, called the \textit{sharp map}, is similarly defined implicitly by the relation $\left<\alpha^\sharp, Y\right> = \alpha(Y)$ for all $\alpha \in T_p^\ast Q$. Let $C^{\infty}(Q)$ and $\Gamma(TQ)$ denote the spaces of smooth scalar fields and smooth vector fields on $Q$, respectively. The sharp map provides a map from $C^{\infty}(Q) \to \Gamma(TQ)$ via $\grad f(p) = df_p^\sharp$ for all $p \in Q$, where $\grad f$ is called the \textit{gradient vector field} of $f \in C^{\infty}(Q)$. More generally, given a map $V: Q \times \cdots \times Q \to \R$ (with $m$ copies of $Q$), we may consider the gradient vector field of $V$ with respect to $i^{\text{th}}$ component as $\grad_i V(q_1, \dots, q_m) = \grad U(q_i)$, where $U(q) = V(q_1, \dots, q_{i-1}, q, q_{i+1}, \dots, q_m)$ for all $q, q_1, \dots, q_m \in Q$.

Vector fields are a special case of smooth sections of vector bundles. In particular, given a vector bundle $(E, Q, \pi)$ with total space $E$, base space $Q$, and projection $\pi: E \to Q$, where $E$ and $Q$ are smooth manifolds, a \textit{smooth section} is a smooth map $X: Q \to E$ such that $\pi \circ X = \text{id}_Q$, the identity function on $Q$. We similarly denote the space of smooth sections on $(E, Q, \pi)$ by $\Gamma(E)$. A \textit{connection} on $(E, Q, \pi)$ is a map $\nabla: \Gamma(TQ) \times \Gamma(E) \to \Gamma(TQ)$ which is $C^{\infty}(Q)$-linear in the first argument, $\R$-linear in the second argument, and satisfies the product rule $\nabla_X (fY) = X(f) Y + f \nabla_X Y$ for all $f \in C^{\infty}(Q), \ X \in \Gamma(TQ), \ Y \in \Gamma(E)$. The connection plays a role similar to that of the directional derivative in classical real analysis. The operator
$\nabla_{X}$ which assigns to every smooth section $Y$ the vector field $\nabla_{X}Y$ is called the \textit{covariant derivative} (of $Y$) \textit{with respect to $X$}.

Connections induces a number of important structures on $Q$, a particularly ubiquitous such structure is the \textit{curvature endomorphism}, which is a map $R: \Gamma(TQ) \times \Gamma(TQ) \times \Gamma(E) \to \Gamma(TQ)$ defined by $R(X,Y)Z := \nabla_{X}\nabla_{Y}Z-\nabla_{Y}\nabla_{X}Z-\nabla_{[X,Y]}Z$ for all $X, Y \in \Gamma(TQ), \ Z \in \Gamma(E)$.  The curvature endomorphism measures the extent to which covariant derivatives commute with one another. 

We now specialize our attention to \textit{affine connections}, which are connections on $TQ$. Let $q: I \to Q$ be a smooth curve parameterized by $t \in I \subset \R$, and denote the set of smooth vector fields along $q$ by $\Gamma(q)$. Then for any affine connection $\nabla$ on $Q$, there exists a unique operator $D_t: \Gamma(q) \to \Gamma(q)$ (called the \textit{covariant derivative along $q$}) which agrees with the covariant derivative $\nabla_{\dot{q}}\tilde{W}$ for any extension $\tilde{W}$ of $W$ to $Q$. A vector field $X \in \Gamma(q)$ is said to be \textit{parallel along $q$} if $\displaystyle{D_t X\equiv 0}$. 

The covariant derivative allows to define a particularly important family of smooth curves on $Q$ called \textit{geodesics}, which are defined as the smooth curves $\gamma$ satisfying $D_t \dot{\gamma} = 0$. Moreover, geodesics induce a map $\mathrm{exp}_q:T_qQ\to Q$ called the \textit{exponential map} defined by $\mathrm{exp}_q(v) = \gamma(1)$, where $\gamma$ is the unique geodesic verifying $\gamma(0) = q$ and $\dot{\gamma}(0) = v$. In particular, $\mathrm{exp}_q$ is a diffeomorphism from some star-shaped neighborhood of $0 \in T_q Q$ to a convex open neighborhood $\mathcal{B}$ (called a \textit{goedesically convex neighborhood}) of $q \in Q$. It is well-known that the Riemannian metric induces a unique torsion-free and metric compatible connection called the \textit{Riemannian connection}, or the \textit{Levi-Civita connection}. Along the remainder of this paper, we will assume that $\nabla$ is the Riemannian connection. For additional information on connections and curvature, we refer the reader to \cite{Boothby}. When the covariant derivative $D_t$ corresponds to the Levi-Civita connection, geodesics can also be characterized as the critical points of the length functional $\displaystyle{L(\gamma) = \int_0^1 \|\dot{\gamma}\|dt}$ among all unit-speed \textit{piece-wise regular} curves $\gamma: [a, b] \to Q$ (that is, where there exists a subdivision of $[a, b]$ such that $\gamma$ is smooth and satisfies $\dot{\gamma} \ne 0$ on each subdivision). 

If we assume that $Q$ is \textit{complete} (that is, $(Q, d)$ is a complete metric space), then by the Hopf-Rinow theorem, any two points $x$ and $y$ in $Q$ can be connected by a (not necessarily unique) minimal-length geodesic $\gamma_{x,y}$. In this case, the Riemannian distance between $x$ and $y$ can be defined by 
\begin{equation}\label{eq: distance-length-geodesics}
    {d(x,y)=\int_{0}^{1}\Big{\|}\frac{d \gamma_{x,y}}{d s}(s)\Big{\|}\, ds}.
\end{equation} Moreover, if $y$ is contained in a geodesically convex neighborhood of $x$, we can write the Riemannian distance by means of the Riemannian exponential as $d(x,y)=\|\mbox{exp}_x^{-1}y\|.$


\subsection{Sobolev Spaces of Curves}\label{Subsection: Sobolev spaces}
One often views finite-dimensional smooth manifolds as spaces which are locally diffeomorphic to $\mathbb{R}^n$ for some $n \in \N$. Infinite-dimensional manifolds are defined in much the same way, with $\mathbb{R}^n$ being replaced by some infinite-dimensional topological vector space equipped with some additional structure that allows for the notion of smoothness. Common choices include locally convex topological vector spaces, Fréchet spaces, Banach spaces, and Hilbert spaces (in decreasing order of generality), which are known as the \textit{model spaces} for the manifold. Each type of model space comes with its own advantages and disadvantages, and is often determined by the problem of interest. In this thesis, the most natural choice in model space turns out to be Hilbert spaces (particularly Sobolev spaces).

Let $I \subset \mathbb{R}$ be a closed interval and $L^2(I, \mathbb{R}^n)$ denote the space of square integrable functions $f: I \to \mathbb{R}^n$. That is, $f \in L^2(I, \mathbb{R}^n)$ if and only if $\int_I \|f(x)\|^2 dx < +\infty$, where $\|\cdot \|$ denotes the Euclidean norm on $\mathbb{R}^n$. Equivalently, we could say that $f = (f^1, \dots, f^n)$ is of class $L^2(I, \mathbb{R}^n)$ if and only if $f^i$ is of class $L^2(I, \mathbb{R})$ for all $1 \le i \le n.$ $L^2$ becomes a Hilbert space when equipped with the inner product $\left< f, g\right> = \int_I \left( f(t) \cdot g(t)\right)dt$, where where $f \cdot g$ denotes the "dot product" on $\mathbb{R}^n$.

Let $k \ge 1$ and consider functions $f, \varphi: [a, b] \to \mathbb{R}^n$ such that $\frac{d^j}{dt^j} \varphi(a) = \frac{d^j}{dt^j} \varphi(b) = 0$ for all $0 \le j \le k$. It follows via integration by parts that
\begin{equation*}
    \int_a^b \left(f(t) \cdot \frac{d^k}{dt^k} \varphi(t)\right)dt = (-1)^k \int_a^b \left(\frac{d^k}{dt^k} f(t) \cdot \varphi(t) \right)dt,
\end{equation*}
The left-hand side of the above equation still makes sense if we assume $f$ only to be integrable on $[a, b]$. If there exists an integrable function $g: [a, b] \to \mathbb{R}^n$ such that 
\begin{equation*}
    \int_a^b \left(f(t) \cdot \frac{d^k}{dt^k} \varphi(t)\right)dt = (-1)^k \int_a^b \left(g(t) \cdot \varphi(t) \right)dt
\end{equation*}
for all $\varphi:[a, b] \to \mathbb{R}^n$ which vanish at the endpoints along with its first $k$ derivatives, we refer to $g$ as the $k^{\text{th}}$ weak derivative of $f$, and often denote $g = \frac{d^k}{dt^k} f$ when there is no confusion. We define the Sobolev space
\begin{align*}
    H^k(I, \mathbb{R}^n) :=& \{f: I \to \mathbb{R}^n \ \vert \ f \text{ is } C^{k-1} \\&\text{ and has } k^{\text{th}} \text{ weak derivative in } L^2(I, \mathbb{R}^n) \}.
\end{align*}
It is well-known that $H^k(I, \mathbb{R}^n)$ becomes a Hilbert space when equipped with the inner product 
\begin{equation*}
    \left<f, g\right>_{H^k} = \sum_{j=0}^k\int_I \left(\frac{d^j}{dt^j}f(t) \cdot \frac{d^j}{dt^j}g(t)\right)dt = \sum_{j=0}^k \left< \frac{d^j}{dt^j}f, \frac{d^j}{dt^j}g\right>_{L^2}.
\end{equation*}
The inner product $\left< \cdot, \cdot \right>_{H^k}$ induces the norm 
\begin{equation*}
    \left\|f \right\|_{H^k} = \left[\sum_{j=0}^k\int_I \left\|\frac{d^j}{dt^j}f(t)\right\|_{\mathbb{R}^n}^2 dt\right]^{1/2},
\end{equation*}
where $\|\cdot\|_{\mathbb{R}^n}$ is the Euclidean norm on $\mathbb{R}^n$. An alternative way to construct $H^k(I, \mathbb{R}^n)$ is to define it as the completion of the space of smooth functions $C^{\infty}(I, \mathbb{R}^n)$ with respect to the norm $\|\cdot\|_{H^k}$. It can be shown that the two characterizations of $H^k(I, \mathbb{R}^n)$ are indeed equivalent. 

Let $Q$ be a finite-dimensional smooth manifold. We define $H^k(I, Q)$ as the set of curves $q: I \to H$ such that for all coordinate charts $(U, \varphi)$ on $Q$ such that $q(I') \subset U$ for some $I' \subset I$, the chart representation $\varphi \circ q: I' \to \mathbb{R}^{\dim(Q)}$ is of Sobolev class $H^k(I', \mathbb{R}^{\dim(Q)}).$ It can be seen that $H^k(I, Q)$ is an infinite-dimensional smooth manifold modelled on $H^k(I, \mathbb{R}^{\dim(Q)})$ (\cite{Jost}). It should be noted that $H^k(I, Q)$ is not in general a Hilbert space, as it not generally a vector space and thus has no well-defined inner product structure. However, the tangent space $T_q H^k(I, Q)$ of Sobolev class $H^k$ vector fields along a curve $q$ on $Q$ may be identified with $H^k(I, \mathbb{R}R^{\dim(Q)})$ and hence is a Hilbert space.

Suppose that $(Q, g)$ is a finite-dimensional complete Riemannian manifold and $q \in H^k(I, Q)$. We equip the vector space $\Gamma(q)$ of smooth vector fields along $q$ with the norm
\begin{equation}\label{eq: norm Riemannian sobolev space}
    \|X\|_{H^k_g} := \left[\sum_{j=0}^k \int_I \|D_t^j X \|^2_g dt \right]^{1/2}
\end{equation}
where $D_t^j$ denotes the $j^{\text{th}}$ covariant derivative along $q$ with respect to the Levi-Civita connection (by convention, we take $D_t^0 X = X$), and $\|\cdot\|_g$ denotes the norm induced by the Riemannian metric $g$. Denote the completion of $\Gamma(q)$ under $\|\cdot\|_{H^k_g}$ by $H^k_g(q)$. Consider an orthonormal basis of parallel vector fields $\{\xi_i\} \in \Gamma(q)$ with respect to $g$. It follows that if $X = X^i \xi_i$, then $D_t^j X = X^{i (j)} \xi_i$ for all $j \in \N$. If we let $A \cdot B := A^i B^i$ for all $A, B \in \Gamma(q)$, where $A = A^i \xi_i$ and $B = B^i \xi_i$, then we have
\begin{equation}
    \|X\|_{H^k_g} := \left[\sum_{j=0}^k \int_I \left(X^{(j)} \cdot X^{(j)} \right) dt \right]^{1/2},
\end{equation}
from which it follows that $H^k_g(I, Q)$ can be identified with $H^k(I, \mathbb{R}^n).$ Hence any complete Riemannian metric on $Q$ induces a Hilbert structure associated to $\Gamma(q)$ that coincides with that of $T_q H^k(I, Q)$. Moreover, it follows that the inner product

\begin{equation}\label{eq: Riemannian metric sobolev space}
    \left<X, Y\right>_{H^k_g} := \sum_{j=0}^k \int_I g\left(D_t^j X, D_t^j Y\right) dt,
\end{equation}

\noindent varies smoothly across the tangent spaces, and hence is a Riemannian metric on $H^k(I, Q)$\footnote{For any Hilbert manifold $M$ modelled on a Hilbert space $H$, any Riemannian structure placed on $M$ is locally equivalent to the Hilbert structure on $H$. In particular, for any local coordinate chart $(\varphi, U)$ on $M$ and point $x \in U$, there exists a unique bounded, positive-definite, self-adjoint operator $g(x)$ on $H$ such that $\left<X, Y\right>_M = \left<g(x)\varphi_\ast(X), \varphi_\ast(Y)\right>_H$ for all $X, Y \in T_x M$. Moreover, the map $x \mapsto g(x)$ is smooth on $U$.}.

In applications, especially when ultimately interested in solutions to some order $2k$ ODE, it is often the case that you wish to consider curves of some specified regularity which satisfy a set of boundary values. For example, curves of Sobolev class $H^{2k}$ regularity whose first $k$ (covariant) derivatives (including positions as $k=0$) satisfy some specified boundary conditions. For that reason, we define the path space:
\begin{align}
    \Omega^{(2k)} =& \{q \in H^{2k}([a, b], Q)\ \vert \ q(a) = q_a, \ q(b) = q_b, \\ \label{eq:Riemannian polynomial path space} & \ D_t^j \dot{q}(a) = \xi^{j}_a\text{ and } D_t^j \dot{q}(b) = \xi^{j}_b \text{ for } j=0,1,\dots, k-1 \}\nonumber
\end{align}
where $q_a, q_b \in Q$ and $\xi^{j}_a \in T_{q_a} Q$ and $\xi^j_b \in T_{q_b} Q$ for all $1 \le j \le k-1.$ 

It is easy to see that $\Omega^{(2k)}$ is the inverse image of $\left((q_a, q_b), (\xi^0_a, \xi^0_b), \dots (\xi^{k-1}_a, \xi^{k-1}_b) \right)$ under the map\newline $F: H^{2k}([a, b], Q) \to TQ^k$ given by \newline $F(q) = \left((q(a), q(b)), (\dot{q}(a), \dot{q}(b)), \dots, (D^{k-1}\dot{q}(a), D^{k-1} \dot{q}(b)) \right)$. Moreover, it can be shown that $F$ is a smooth submersion, from which it follows by the implicit function theorem that $\Omega^{(2k)}$ is a closed submanifold of $H^{2k}([0, 1], Q),$ and hence inherits its Hilbert structure. The tangent space $T_q \Omega^{(2k)}$ can be indentified with the space $X \in \mathring{H}^{2k}_g(q)$ of vector fields in $H^{2k}_g(q)$ which vanish at the endpoints along with their first $k$ covariant derivatives. Hence we may equip $\Omega^{(2k)}$ with the Riemannian structure \eqref{eq: Riemannian metric sobolev space}. We also consider the special case 
\begin{equation}\label{eq: path space geodesics}
    \Omega^{(1)} = \{q \in H^{1}([a, b], Q) \ \vert \ q(a) = q_a, \ q(b) = q_b\}
\end{equation}
which is itself a closed submanifold of $H^{1}([a, b], Q)$, and is of particular importance for geodesics. We will occasionally use the notation $\Omega^{(1), [a, b]}_{q_a, q_b}(Q)$ (and similar for higher-order path spaces) when it is necessary to refer to the boundary conditions, underlying manifold, and interval of integration.

\subsection{Riemannian geometry on Lie Groups}\label{sec: background_Lie}

Let $G$ be a Lie group with Lie algebra $\g := T_{e} G$, where $e$ is the identity element of $G$. The left-translation map $L: G \times G \to G$ provides a group action of $G$ on itself under the relation $L_{g}h := gh$ for all $g, h \in G$. Given any inner-product $\left< \cdot, \cdot \right>_{\g}$ on $\g$, left-translation provides us with a Riemannian metric $\left< \cdot, \cdot \right>$ on $G$ via the relation:
\begin{align*}
    \left< X_g, Y_g \right> := \left< g^{-1} X_g, g^{-1} Y_g \right>_{\g},
\end{align*}
for all $g \in G, X_g, Y_g \in T_g G$. Such a Riemannian metric is called \textit{left-invariant}, and it follows immediately that there is a one-to-one correspondence between left-invariant Riemannian metrics on $G$ and inner products on $\g$, and that $L_g: G \to G$ is an isometry for all $g \in G$ by construction. Any Lie group equipped with a left-invariant metric is complete as a Riemannian manifold. In the remainder of the section, we assume that $G$ is equipped with a left-invariant Riemannian metric.

In the following $L_{g^{\ast}}$ stands for the push-forward of $L_g$, which is well-defined because $L_g: G \to G$ is a diffeomorphism for all $g \in G$. We call a vector field $X$ on $G$ \textit{left-invariant} if $L_{g\ast} X = X$ for all $g \in G$, and we denote the set of all left-invariant vector fields on $G$ by $\mathfrak{X}_L(G)$. It is well-known that the map $\phi: \g \to \mathfrak{X}_L(G)$ defined by $\phi(\xi)(g) = L_{g\ast} \xi$ for all $\xi \in \g, g \in G$ is an isomorphism between vector spaces. This isomorphism allows us to construct an operator $\nabla^{\g}: \g \times \g \to \g$ defined by:
\begin{align}
    \nabla^{\g}_{\xi} \eta := \nabla_{\phi(\xi)} \phi(\eta)(e),\label{g-connection}
\end{align}
for all $\xi, \eta \in \g$, where $\nabla$ is the Levi-Civita connection on $G$ corresponding to the left-invariant Riemannian metric $\left< \cdot, \cdot \right>$. Although $\nabla^{\g}$ is not a connection, we shall refer to it as the \textit{Riemannian $\g$-connection} corresponding to $\nabla$ because of the similar properties that it satisfies:
\begin{lemma}\label{lemma: covg_prop}
$\nabla^\g: \g \times \g \to \g$ is $\R$-bilinear, and for all $\xi, \eta, \sigma \in \g$, the following relations hold:
\begin{equation*}
 \hbox{(1) }\nabla_{\xi}^{\g} \eta - \nabla_{\eta}^{\g} \xi = \left[ \xi, \eta \right]_{\g},\, \hbox{(2) } \left< \nabla_{\sigma}^{\g} \xi, \eta \right> + \left<  \xi, \nabla_{\sigma}^{\g}\eta \right> = 0.
\end{equation*}
\end{lemma}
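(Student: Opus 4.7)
The plan is to transport the defining properties of the Levi-Civita connection $\nabla$ on $G$ down to $\g$ via the vector-space isomorphism $\phi: \g \to \mathfrak{X}_L(G)$, using three standard facts about left-invariant geometry on a Lie group: (i) $\phi$ is $\R$-linear; (ii) left-invariant vector fields are closed under the Lie bracket, with $[\phi(\xi),\phi(\eta)] = \phi([\xi,\eta]_\g)$, which is in fact how the Lie algebra bracket on $\g = T_e G$ is defined; and (iii) the pointwise inner product $\langle \phi(\xi), \phi(\eta) \rangle$ of two left-invariant vector fields is a constant function on $G$, since left translations are isometries and $\phi(\xi)(g) = L_{g\ast}\xi$.

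First I would check $\R$-bilinearity. Since $\phi$ is linear in its argument, we have $\phi(a\xi_1 + b\xi_2) = a\phi(\xi_1) + b\phi(\xi_2)$ for $a,b \in \R$, and $\nabla$ is $C^\infty(G)$-linear (hence $\R$-linear) in the first slot and $\R$-linear in the second slot. Evaluating $\nabla_{\phi(\cdot)}\phi(\cdot)$ at $e$ preserves these linearities, giving $\R$-bilinearity of $\nabla^\g$.

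For property (1), I would invoke that $\nabla$ is torsion-free:
\begin{equation*}
\nabla_{\phi(\xi)} \phi(\eta) - \nabla_{\phi(\eta)} \phi(\xi) = [\phi(\xi),\phi(\eta)] = \phi([\xi,\eta]_\g).
\end{equation*}
Both sides are left-invariant vector fields, so evaluating at $e$ and using $\phi(\zeta)(e) = L_{e\ast}\zeta = \zeta$ yields $\nabla^\g_\xi \eta - \nabla^\g_\eta \xi = [\xi,\eta]_\g$.

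For property (2), I would use metric compatibility of $\nabla$ applied to $\phi(\sigma), \phi(\xi), \phi(\eta)$:
\begin{equation*}
\phi(\sigma)\langle \phi(\xi), \phi(\eta)\rangle = \langle \nabla_{\phi(\sigma)}\phi(\xi), \phi(\eta)\rangle + \langle \phi(\xi), \nabla_{\phi(\sigma)}\phi(\eta)\rangle.
\end{equation*}
By observation (iii), the function $\langle \phi(\xi),\phi(\eta)\rangle$ equals the constant $\langle \xi,\eta\rangle_\g$, so the left-hand side vanishes identically. Evaluating the resulting identity at $e$ and noting that the left-invariant metric restricts to $\langle\cdot,\cdot\rangle_\g$ at $e$ gives (2). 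There is no real obstacle here; the only subtlety worth flagging is the (iii) step, where left-invariance of the metric and of the vector fields must combine correctly to make $\langle \phi(\xi),\phi(\eta)\rangle$ a constant function and thereby kill the derivative term in metric compatibility.
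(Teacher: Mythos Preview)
Your proof is correct. The paper states this lemma without proof, so there is no argument to compare against; what you have written is precisely the standard derivation one would expect: $\R$-bilinearity from the linearity of $\phi$ and of $\nabla$, property (1) from torsion-freeness of the Levi-Civita connection together with $[\phi(\xi),\phi(\eta)]=\phi([\xi,\eta]_\g)$, and property (2) from metric compatibility together with the constancy of $g\mapsto\langle\phi(\xi),\phi(\eta)\rangle_g$ under a left-invariant metric.
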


\begin{remark}\label{remark: covg_operator}
We may consider the Riemannian $\g$-connection as an operator $\nabla^\g: C^{\infty}([a, b], \g)\times C^{\infty}([a, b], \g) \to C^{\infty}([a, b], \g)$ in a natural way,  namely, if $\xi, \eta \in C^{\infty}([a, b], \g)$, we can write $(\nabla^\g_{\xi} \eta)(t) := \nabla^\g_{\xi(t)}\eta(t)$ for all $t \in [a, b]$. With this notation, Lemma \ref{lemma: covg_prop} works identically if we replace $\xi, \eta, \sigma \in \g$ with $\xi, \eta, \sigma \in C^{\infty}([a, b], \g)$.
\end{remark}

Given a basis $\{A_i\}$ of $\g$, we may write any vector field $X$ on $G$ as $X = X^i \phi(A_i)$, where $X^i: G \to \R$, where we have adopted the Einstein sum convention. If $X$ is a vector field along some smooth curve $g: [a, b] \to G$, then we may equivalently write $X = X^i g A_i$, where now $X^i: [a, b] \to \R$ and $g A_i =: L_g A_i$. We denote $\dot{X} = \dot{X}^i A_i$, which may be written in a coordinate-free fashion via $\dot{X}(t) = \frac{d}{dt}\left(L_{g(t)^{-1 \ast}}X(t) \right)$. We now wish to understand how the Levi-Civita connection $\nabla$ along a curve is related to the Riemannian $\g$-connection $\nabla^\g$. This relation is summarized in the following result \cite{goodman2022reduction}.

\begin{lemma}\label{lemma: cov-to-covg}
Consider a Lie group $G$ with Lie algebra $\g$ and left-invariant Levi-Civita connection $\nabla$. Let $g: [a,b] \to G$ be a smooth curve and $X$ a smooth vector field along $g$. Then the following relation holds for all $t \in [a, b]$:
\begin{align}
    D_t X(t) = g(t)\left(\dot{X}(t) + \nabla_{\xi}^\g \eta(t) \right).\label{eqs: Cov-to-covg}
\end{align}
\end{lemma}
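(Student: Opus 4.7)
The plan is to expand $X$ in the left-invariant frame $\{\phi(A_i)\}$, apply the Leibniz rule for $D_t$, and then use the left-invariance of the Levi-Civita connection $\nabla$ to push everything down to the identity, where it becomes $\nabla^\g$. Following the notation set up in the preceding paragraph, I take $\xi(t) := g(t)^{-1}\dot g(t) \in \g$ as the body velocity and $\eta(t) := g(t)^{-1}X(t) \in \g$, so that in a fixed basis $\{A_i\}$ of $\g$ one has $X(t) = X^i(t)\,g(t) A_i$ with $\eta = X^i A_i$ and $\dot g = \xi^j g A_j$ pointwise.

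First I would apply the Leibniz rule for the covariant derivative along $g$, using that the left-invariant vector field $\phi(A_i)$ extends $t \mapsto g(t) A_i$:
\[
D_t X \;=\; \dot X^i (g A_i) \;+\; X^i\,\nabla_{\dot g}\phi(A_i).
\]
The first term is already in the desired form, since $\dot X^i g A_i = g\,\dot X$ by definition of $\dot X$ in the excerpt.

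The central step is the computation of $\nabla_{\dot g}\phi(A_i)$. Because the metric is left-invariant, $L_h$ is an isometry for every $h \in G$, and by uniqueness in the Koszul formula the Levi-Civita connection therefore satisfies $L_{h\ast}(\nabla_Y Z) = \nabla_{L_{h\ast} Y}\,L_{h\ast} Z$. Applied to $Y = \phi(A_j)$ and $Z = \phi(A_i)$, which are left-invariant, this shows that $\nabla_{\phi(A_j)}\phi(A_i)$ is itself a left-invariant vector field; its value at the identity is $\nabla^\g_{A_j} A_i$ by definition \eqref{g-connection}, so at $g$ it equals $g\,\nabla^\g_{A_j} A_i$. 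Combining with $\dot g(t) = \xi^j(t)\,\phi(A_j)(g(t))$ and the $C^\infty(G)$-linearity of $\nabla$ in its first slot,
\[
\nabla_{\dot g}\phi(A_i) \;=\; \xi^j\,\nabla_{\phi(A_j)}\phi(A_i)\bigr|_{g} \;=\; \xi^j\,g\,\nabla^\g_{A_j} A_i .
\]

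Finally I would assemble the pieces and use the $\R$-bilinearity of $\nabla^\g$ from Lemma \ref{lemma: covg_prop}:
\[
D_t X \;=\; g\!\left(\dot X^i A_i + X^i\xi^j\,\nabla^\g_{A_j} A_i\right) \;=\; g\!\left(\dot X + \nabla^\g_{\xi}\eta\right),
\]
which is exactly \eqref{eqs: Cov-to-covg}. The main obstacle is the middle step: one has to carefully justify that a left-invariant Riemannian metric forces the Levi-Civita connection to be left-invariant, so that $\nabla_{\phi(A_j)}\phi(A_i)$ descends to a well-defined element of $\g$ equal to $\nabla^\g_{A_j} A_i$; everything else is bookkeeping via the Leibniz rule and bilinearity.
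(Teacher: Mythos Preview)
Your argument is correct. The paper itself does not supply a proof of this lemma; it simply cites \cite{goodman2022reduction} for the result, so there is no in-paper proof to compare against. What you wrote is the standard derivation: expand $X$ in the left-invariant frame, apply the Leibniz rule for $D_t$, use that $\phi(A_i)$ extends $t\mapsto g(t)A_i$ so that $D_t(gA_i)=\nabla_{\dot g}\phi(A_i)$, and then invoke left-invariance of the Levi-Civita connection (which follows because each $L_h$ is an isometry) to conclude that $\nabla_{\phi(A_j)}\phi(A_i)$ is the left-invariant extension of $\nabla^\g_{A_j}A_i$. The only point one might insist on making explicit is that your identification $\xi=g^{-1}\dot g$ and $\eta=g^{-1}X$ matches the implicit notation of the lemma (the paper never spells these out inside the statement), but that is exactly how the surrounding text uses them.
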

\begin{lemma}\label{lemma: covg-decomp}
The Riemannian $\g$-connection satisfies:
\begin{align}
    \nabla_{\xi}^\g \eta = \frac12\left( [\xi, \eta]_\g - \ad^\dagger_{\xi} \eta - \ad^\dagger_{\eta} \xi\right),\label{eq: covg_decomposition}
\end{align} 
for all $\xi, \eta \in \g$.
\end{lemma}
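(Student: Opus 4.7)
The plan is to derive the decomposition directly from the Koszul formula for the Levi-Civita connection $\nabla$ on $G$, evaluated on pairs of left-invariant vector fields. Recall that for any Riemannian connection and arbitrary vector fields $X, Y, Z$ on $G$,
\begin{align*}
2\langle \nabla_X Y, Z\rangle =\;& X\langle Y, Z\rangle + Y\langle X, Z\rangle - Z\langle X, Y\rangle \\
& + \langle [X,Y], Z\rangle - \langle [X,Z], Y\rangle - \langle [Y,Z], X\rangle.
\end{align*}
I would specialize this identity to $X = \phi(\xi)$, $Y = \phi(\eta)$, $Z = \phi(\sigma)$ for arbitrary $\xi, \eta, \sigma \in \g$, and then evaluate the resulting equality at the identity $e \in G$.

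The first simplification comes from the left-invariance of the metric: since $\langle \phi(\xi), \phi(\eta)\rangle$ is a constant function on $G$ (equal to $\langle \xi, \eta\rangle_\g$), the three directional derivative terms $X\langle Y, Z\rangle$, $Y\langle X, Z\rangle$, $Z\langle X, Y\rangle$ vanish identically. The bracket of left-invariant vector fields is left-invariant with $[\phi(\xi), \phi(\eta)](e) = [\xi, \eta]_\g$, so combining this with the definition \eqref{g-connection} of $\nabla^\g$ yields
\begin{equation*}
2\langle \nabla^\g_\xi \eta, \sigma\rangle_\g = \langle [\xi, \eta]_\g, \sigma\rangle_\g - \langle [\xi, \sigma]_\g, \eta\rangle_\g - \langle [\eta, \sigma]_\g, \xi\rangle_\g.
\end{equation*}

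Next I would rewrite the last two terms using the definition of $\ad^\dagger$ as the metric-adjoint of $\ad$ on $\g$, namely $\langle \ad^\dagger_\xi \eta, \sigma\rangle_\g = \langle \eta, \ad_\xi \sigma\rangle_\g = \langle \eta, [\xi, \sigma]_\g\rangle_\g$, and similarly for the other term. Substituting produces
\begin{equation*}
2\langle \nabla^\g_\xi \eta, \sigma\rangle_\g = \langle [\xi, \eta]_\g - \ad^\dagger_\xi \eta - \ad^\dagger_\eta \xi, \sigma\rangle_\g,
\end{equation*}
and since $\sigma \in \g$ is arbitrary and $\langle \cdot, \cdot\rangle_\g$ is nondegenerate, the claimed identity \eqref{eq: covg_decomposition} follows.

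There is no serious obstacle here; the main point is recognizing that left-invariance of the metric kills the three non-bracket Koszul terms, reducing the computation to pure Lie-algebraic manipulation. A consistency check is that the symmetric-in-$(\xi,\eta)$ part of the right-hand side, $-\tfrac12(\ad^\dagger_\xi \eta + \ad^\dagger_\eta \xi)$, matches the symmetric part forced by Lemma \ref{lemma: covg_prop}(2), while the antisymmetric part reproduces $\tfrac12[\xi,\eta]_\g$ in agreement with Lemma \ref{lemma: covg_prop}(1).
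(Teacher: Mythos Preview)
Your argument is correct: specializing the Koszul formula to left-invariant vector fields, using that $\langle \phi(\xi),\phi(\eta)\rangle$ is constant to kill the directional-derivative terms, and then rewriting the remaining bracket terms via the defining relation $\langle \ad^\dagger_\xi \eta,\sigma\rangle_\g = \langle \eta,[\xi,\sigma]_\g\rangle_\g$ yields exactly \eqref{eq: covg_decomposition}. The paper itself states this lemma without proof (it is listed alongside Lemma~\ref{lemma: cov-to-covg} following a reference to \cite{goodman2022reduction}), so there is no in-paper argument to compare against; your Koszul-formula derivation is the standard one and is precisely what one would expect the cited source to contain.
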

\section{The Collision Avoidance Task}\label{subsection: safety collision avoidance}

We now switch our attention to multi-agent systems and the collision avoidance task. Consider a set $\mathcal{V}$ consisting of $s\geq 2$ agents on $Q$, {a complete and connected Riemannian manifold}. The configuration of each agent at any given time is determined by the element $q_i(t)\in Q$, $i=1,\ldots,s$. The neighboring relationships are described by an undirected time-invariant graph $\mathbb{G} = (\mathcal{V}, \mathcal{E})$ with edge set $\mathcal{E}\subseteq\mathcal{V}\times\mathcal{V}$. The set of neighbors $\mathcal{N}_i$ for the agent $i\in\mathcal{V}$ is given by $\mathcal{N}_i=\{j\in\mathcal{V}:(i,j)\in\mathcal{E}\}$. An agent $i\in\mathcal{V}$ can measure its Riemannian distance from other agents in the subset $\mathcal{N}_i \subseteq \mathcal{V}$. {The assumptions that $Q$ is complete and connected and $\mathbb{G}$ is undirected and time-invariant will remain for the remainder of the paper}.

For $i=1,...,s$, and points $\xi_i, \eta_i \in TQ$, consider the sets $\Omega_i := \Omega_{\xi_i, \eta_i}^{(2), [a, b]}$ and the functional $J_{\coll}$ on $\Omega := \Omega_1\times\cdot\cdot\cdot\times\Omega_s$ defined by:
\begin{align}\label{J_collision_avoidance}
J_{\coll}(q_1,q_2,\ldots,q_s)= \frac12\sum\limits_{i=1}^s \int_a^b \Big{(} \Big{|}\Big{|}&\frac{D \dot{q}_i}{dt}(t)\Big{|}\Big{|}^2 \\&+ \sum\limits_{(i, j) \in \mathcal{E}}V_{ij}(q_i(t),q_j(t))\Big{)}dt.\nonumber
\end{align}
where $V_{ij}:Q\times Q\to\mathbb{R}$ is a smooth non-negative function called an \textit{artificial potential} satisfying the symmetry relations $V_{ij} = V_{ji}$ and $V_{ij}(p, x) = V_{ij}(x, p)$ for all $(i,j)\in\mathcal{E}$ and $(p,x) \in Q\times Q$. 

The sets $Q^s := Q \times Q \times \dots Q$ ($s$ times) and $\Omega$ are (complete) Riemannian manifolds when equipped with the respective product metrics. Moreover, $\Omega$ is an infinite-dimensional Hilbert manifold with model space $(H^2([a, b], \mathbb{R}^n))^s$, which may be identified with the path space $\Omega^{(2)}$ of $Q^s$. With this identification, it is clear that $J_{\coll}$ may be identified with $J: \Omega^{(2)} \to \R$, where $V: Q^s \to \R$ is given by $V(q) = V(q^1, \dots, q^s) := \sum\limits_{j\in \mathcal{N}_i}V_{ij}(q_i(t),q_j(t))$. It follows that the analysis done in \cite{goodman2021obstacle} also applies to $J_{\coll}$ and its minimizers. In particular, the necessary conditions for optimality take the following form:

\begin{proposition}\label{prop: necessary conditions collision avoidance}
{A curve $q = (q_1,...,q_s) \in \Omega$ is a critical point of $J_{\coll}$ if and only if for each $i \in \mathcal{V}$, $q_i \in \Omega_i$ is smooth and for all $t \in [0, T]$ satisfies}
\begin{equation}\label{eq: necessary conditions collision avoidance}
    D^3_t \dot{q}_i + R\Big{(}D_t \dot{q}_i,\dot{q}_i\Big{)}\dot{q}_i=-\sum\limits_{j\in \mathcal{N}_i} \hbox{\grad}_1 \, V_{ij}(q_i(t),q_j(t)),
\end{equation}
where $R$ denotes the curvature endomorphism on $Q$ and $\grad_1 V_{ij}$ denotes the gradient vector field of $V_{ij}$ with respect to the first argument.
\end{proposition}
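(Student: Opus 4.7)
The plan is to derive the stated equations as the Euler--Lagrange equations of $J_{\coll}$ by a standard first-variation argument, then invoke a regularity bootstrap to upgrade critical points in $\Omega$ from Sobolev class $H^2$ to smooth. Concretely, take an admissible variation $q^\varepsilon = (q_1^\varepsilon,\ldots,q_s^\varepsilon)$ of $q \in \Omega$ with variational field $W = (W_1,\ldots,W_s)$, where each $W_i \in T_{q_i}\Omega_i$ vanishes along with $D_t W_i$ at $t=a,b$. Since agents can be varied independently, the components $W_i$ can be chosen freely in their respective tangent spaces.

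The first ingredient is the differentiation of the kinetic term. Using torsion-freeness of $\nabla$ one has $\nabla_\varepsilon \dot{q}_i^\varepsilon|_{0} = D_t W_i$, and using the defining property of $R$ together with $[\partial_\varepsilon,\partial_t]=0$ gives
\begin{equation*}
\nabla_\varepsilon D_t \dot{q}_i^\varepsilon \big|_{0} = D_t^2 W_i + R(W_i,\dot{q}_i)\dot{q}_i.
\end{equation*}
Therefore
\begin{equation*}
\frac{d}{d\varepsilon}\bigg|_{0} \tfrac12\int_a^b \|D_t\dot{q}_i^\varepsilon\|^2 dt = \int_a^b \langle D_t\dot{q}_i, D_t^2 W_i\rangle\, dt + \int_a^b \langle D_t\dot{q}_i, R(W_i,\dot{q}_i)\dot{q}_i\rangle\, dt.
\end{equation*}
Two integrations by parts on the first integral, exploiting the vanishing of $W_i$ and $D_t W_i$ at the endpoints, convert it into $\int_a^b \langle D_t^3\dot{q}_i, W_i\rangle\, dt$. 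For the curvature term I would use the pair and skew symmetries of the Riemann $(0,4)$-tensor to rewrite $\langle R(W_i,\dot{q}_i)\dot{q}_i, D_t\dot{q}_i\rangle = \langle R(D_t\dot{q}_i,\dot{q}_i)\dot{q}_i, W_i\rangle$.

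Next I would handle the potential term. Differentiating gives
\begin{equation*}
\frac{d}{d\varepsilon}\bigg|_{0} V_{ij}(q_i^\varepsilon,q_j^\varepsilon) = \langle \grad_1 V_{ij}(q_i,q_j), W_i\rangle + \langle \grad_2 V_{ij}(q_i,q_j), W_j\rangle.
\end{equation*}
Summing $\tfrac12 \sum_{i}\sum_{(i,j)\in\mathcal{E}}$, the symmetries $V_{ij}=V_{ji}$ and $V_{ij}(p,x)=V_{ij}(x,p)$ (so that $\grad_2 V_{ij}(p,x) = \grad_1 V_{ij}(x,p) = \grad_1 V_{ji}(x,p)$) let one relabel $i\leftrightarrow j$ in the second piece, using that $\mathbb{G}$ is undirected, and collapse both terms to $\sum_i \sum_{j\in\mathcal{N}_i} \langle \grad_1 V_{ij}(q_i,q_j), W_i\rangle$. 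Combining this with the kinetic contribution yields
\begin{equation*}
\delta J_{\coll}[W] = \sum_{i=1}^s \int_a^b \Big\langle D_t^3\dot{q}_i + R(D_t\dot{q}_i,\dot{q}_i)\dot{q}_i + \sum_{j\in\mathcal{N}_i} \grad_1 V_{ij}(q_i,q_j),\; W_i \Big\rangle\, dt.
\end{equation*}
Since the $W_i$ are independent and can be localized, the fundamental lemma of the calculus of variations delivers \eqref{eq: necessary conditions collision avoidance} componentwise in the sense of distributions.

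The step I expect to require the most care is the regularity assertion that $q_i$ is \emph{smooth}. A priori $q_i$ only belongs to $H^2$, so $D_t^3\dot{q}_i$ is not defined pointwise and \eqref{eq: necessary conditions collision avoidance} must first be interpreted weakly against test fields $W_i$ with vanishing endpoint data. I would then bootstrap: the weak equation identifies $D_t^3\dot{q}_i$ with a continuous expression in $q_i,\dot{q}_i,D_t\dot{q}_i$, which lifts $q_i$ to $H^3$; iterating on the ODE $D_t^4\dot{q}_i = -R(D_t\dot{q}_i,\dot{q}_i)\dot{q}_i - \sum_j \grad_1 V_{ij}(q_i,q_j)$ and using smoothness of $R$ and $V_{ij}$ upgrades $q_i$ to $C^\infty$. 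The converse direction is immediate by reversing the integration by parts: any smooth $q$ satisfying \eqref{eq: necessary conditions collision avoidance} makes $\delta J_{\coll}[W] = 0$ for every admissible $W$.
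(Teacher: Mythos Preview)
Your argument is correct. The paper, however, does not give a self-contained proof of this proposition. Instead, it observes that the product $\Omega = \Omega_1 \times \cdots \times \Omega_s$ can be identified with the single path space $\Omega^{(2)}$ on $Q^s$, and that under this identification $J_{\coll}$ becomes the single-agent modified-cubic functional $J$ on $Q^s$ with potential $V(q_1,\dots,q_s) = \sum_{(i,j)\in\mathcal{E}} V_{ij}(q_i,q_j)$. The necessary conditions and the $H^2 \Rightarrow C^\infty$ regularity are then imported wholesale from the single-agent obstacle-avoidance result already proved in the cited reference. Equation \eqref{eq: necessary conditions collision avoidance} is simply the $i$-th component of $D_t^3 \dot{q} + R(D_t\dot{q},\dot{q})\dot{q} = -\grad V(q)$ under the product metric on $Q^s$, using that $\grad_i V = \sum_{j\in\mathcal{N}_i}\grad_1 V_{ij}$.

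What your approach buys is a self-contained derivation that makes the edge-symmetrization of the potential explicit (your relabelling $i\leftrightarrow j$ via $V_{ij}=V_{ji}$ and $V_{ij}(p,x)=V_{ij}(x,p)$ is exactly what justifies the factor of $\tfrac12$ disappearing). What the paper's approach buys is brevity and a conceptual point: there is nothing genuinely new in the multi-agent necessary conditions beyond the product structure, so no fresh variational computation is needed. Both routes lead to the same place.

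One minor slip: in your bootstrap paragraph you write ``the ODE $D_t^4\dot{q}_i = \dots$'' where you mean $D_t^3\dot{q}_i$.
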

\section{Reduction by Symmetry on Lie Groups}\label{section: Reduction by symmetry Lie Groups}

In this section, we reduce the necessary conditions \eqref{eq: necessary conditions collision avoidance} by symmetry on a Lie group $G$ equipped with a left-invariant Riemannian metric. This process amounts to left-translating the necessary conditions on $G$ to some equivalent set of equations on the Lie algebra $\g$, together with a reconstruction equation. In Section \ref{sec: reduction cubics bi-invariant metrics}, we will additionally consider the special case that $G$ admits a bi-invariant metric, and show how the gradient vector field of the artificial potential can be calculated explicitly in the collision avoidance problem. Reduced collision avoidance extremals for rigid body motions in $\SO(3)$ are considered as an example. 

\subsection{Reduction of Necessary Conditions}\label{subsection: reduction Lie groups left-invariant metrics}

We wish to obtain Euler-Poincar\'e equations corresponding to \eqref{eq: necessary conditions collision avoidance} under the following assumption:
\begin{quote}
\begin{description}
    \item[\textbf{G1:}] $Q = G$ is a connected Lie group endowed with a left-invariant Riemannian metric and corresponding Levi-Civita connection $\nabla$.
\end{description}
\end{quote}

\noindent To do this, we first must understand the forms that $D_t^3 \dot{g}$ and $R(D_t \dot{g}, \dot{g})\dot{g}$ take when left-translated to curves in the Lie algebra $\g$. This is summarized in the following lemma:

{\begin{lemma}\label{lemma: cubic_red}
Let $g: [a, b] \to G$ be a smooth curve and set $\xi^{(0)} := g^{-1}\dot{g}$. Recursively define $\xi^{(i)} = \dot{\xi}^{(i-1)} + \nabla^\g_{\xi^{(0)}} \xi^{(i-1)}$ for $i = 1,2$. Then,
\begin{align}
    D^3_t \dot{g} &= g\Big{(} \dot{\xi}^{(2)} + \nabla^\g_{\xi^{(0)}} \xi^{(2)} \Big{)},\label{covTTTT} \\
    R\left(D_t \dot{g}, \dot{g}\right)\dot{g} &= g R\big{(}\xi^{(1)}, \xi^{(0)} \big{)}\xi^{(0)}.\label{R_g}
\end{align}
\end{lemma}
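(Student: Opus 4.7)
The strategy is to reduce both expressions by iteratively applying Lemma \ref{lemma: cov-to-covg} and by exploiting the fact that the Levi-Civita connection, hence its curvature endomorphism, is left-invariant because the metric is. Throughout, I adopt the notational convention (already used in the paper) that for a vector field $X$ along $g$, writing $X = g\eta$ means $\eta(t) := g(t)^{-1} X(t) \in \g$, so that $\dot g = g\xi^{(0)}$ by the very definition of $\xi^{(0)}$.

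For the first identity, I would proceed by a short induction on the order of the covariant derivative, showing that $D_t^i \dot g = g\,\xi^{(i)}$ for $i = 0, 1, 2$. The base case $i=0$ is the definition of $\xi^{(0)}$. For the inductive step, assuming $D_t^i \dot g = g\,\xi^{(i)}$, Lemma \ref{lemma: cov-to-covg} applied to the vector field $X = g\,\xi^{(i)}$ along $g$ (so $\eta = \xi^{(i)}$ and the body velocity is $\xi^{(0)}$) yields
\begin{equation*}
D_t^{i+1} \dot g \;=\; D_t\bigl(g\,\xi^{(i)}\bigr) \;=\; g\Bigl(\dot\xi^{(i)} + \nabla^\g_{\xi^{(0)}} \xi^{(i)}\Bigr) \;=\; g\,\xi^{(i+1)},
\end{equation*}
using the recursion that defines $\xi^{(i+1)}$. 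Setting $i=2$ and applying Lemma \ref{lemma: cov-to-covg} one last time produces exactly \eqref{covTTTT}. The argument is essentially unwinding Lemma \ref{lemma: cov-to-covg} three times, with no subtlety beyond bookkeeping.

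The second identity \eqref{R_g} is where the geometric content sits. Since $\left<\cdot,\cdot\right>$ is left-invariant, the map $L_g: G \to G$ is an isometry for every $g \in G$. Isometries preserve the Levi-Civita connection and therefore intertwine the curvature endomorphism, so
\begin{equation*}
R_{g\cdot p}\bigl(L_{g*}X,\, L_{g*}Y\bigr) L_{g*}Z \;=\; L_{g*}\,R_p(X,Y)Z
\end{equation*}
for all $g,p \in G$ and $X,Y,Z \in T_p G$. I would apply this at $p = e$ with $g = g(t)$ and with $X = \xi^{(1)}(t)$, $Y = Z = \xi^{(0)}(t)$, noting from the first part that $D_t\dot g(t) = L_{g(t)*}\xi^{(1)}(t)$ and $\dot g(t) = L_{g(t)*}\xi^{(0)}(t)$. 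Combining these, \eqref{R_g} drops out, with the interpretation that on the right-hand side $R$ denotes the curvature endomorphism at the identity (equivalently, the $\g$-valued curvature induced by $\nabla^\g$ through Lemma \ref{lemma: covg-decomp}).

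The only genuine obstacle, beyond careful pushforward bookkeeping, is justifying the left-invariance of $R$. That fact follows because left-translations are isometries of the metric and hence commute with the Levi-Civita connection and Lie bracket, making each of the three terms in $R(X,Y)Z = \nabla_X\nabla_Y Z - \nabla_Y\nabla_X Z - \nabla_{[X,Y]} Z$ left-equivariant. Once this is in hand, both \eqref{covTTTT} and \eqref{R_g} reduce to direct computations of the type just outlined.
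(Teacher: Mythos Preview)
Your proof is correct and follows essentially the same approach as the paper: iterate Lemma~\ref{lemma: cov-to-covg} to obtain $D_t^i \dot g = g\,\xi^{(i)}$ and apply it once more for \eqref{covTTTT}, then invoke the left-invariance of the curvature tensor (since left-translations are isometries) for \eqref{R_g}. The paper's proof is terser but structurally identical; your added justification of why $R$ is left-invariant is a welcome elaboration rather than a departure.
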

\begin{proof}
It's clear from Lemma \ref{lemma: cov-to-covg} and Lemma \ref{lemma: covg-decomp} that $\xi^{(i)} = g^{-1}D_t^i \dot{g}$ for $i = 0,1,2.$ One more application of Lemma \ref{lemma: cov-to-covg} to $D_t^2 \dot{g} = g \xi^{(2)}$ yields equation \eqref{covTTTT}.
Equation \eqref{R_g} follows immediately by the fact that $R$ is a left-invariant tensor field and observing that $R\left(D_t \dot{g}, \dot{g}\right)\dot{g} = R\big{(}g\xi^{(1)}, g\xi^{(0)} \big{)}g\xi^{(0)} = g R\big{(}\xi^{(1)}, \xi^{(0)} \big{)}\xi^{(0)}$.
\end{proof}}

The quantities calculated in Lemma \ref{lemma: cubic_red} may be substituted directly into equation \eqref{eq: necessary conditions collision avoidance}. If $g$ were a Riemannian cubic polynomial (i.e., in the case that $V \equiv 0$), we would immediately obtain reduced equations on the Lie algebra $\g$. However, we still must handle the artificial potential. We may left-translate the gradient potential directly to the Lie algebra. That is, we write $\grad V(g) = L_{g^\ast} \circ L_{g^{-1 \ast}} \grad V(g)$, and substitute this directly into \eqref{eq: necessary conditions collision avoidance} along with equations \eqref{covTTTT} and \eqref{R_g} to obtain the reduced equations. 


First, we show that the Riemannian distance $d$ is invariant under left-translation, which follows immediately by the left-invariance of the metric:

\begin{lemma}\label{lemma: distance-left-inv}
$d(gq, gp) = d(q, p)$ for all $g,q, p \in G$.
\end{lemma}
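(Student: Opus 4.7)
The plan is to exploit the fact, already noted in Section~\ref{sec: background_Lie}, that the map $L_g:G\to G$ is an isometry of $(G,\langle\cdot,\cdot\rangle)$ by construction of the left-invariant metric, and to combine this with the variational characterization of the Riemannian distance given in equation~\eqref{eq: distance-length-geodesics}.

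First I would verify that the length functional is preserved by left-translation. Given a piecewise regular curve $\gamma:[0,1]\to G$ with $\gamma(0)=q$ and $\gamma(1)=p$, the curve $\sigma := L_g\circ\gamma$ satisfies $\sigma(0)=gq$, $\sigma(1)=gp$, and has velocity $\dot\sigma(t)=L_{g\ast}\dot\gamma(t)$. Applying the left-invariance identity $\langle L_{g\ast}X, L_{g\ast}Y\rangle_{gh}=\langle X,Y\rangle_{h}$ pointwise yields $\|\dot\sigma(t)\|=\|\dot\gamma(t)\|$ for every $t\in[0,1]$, and hence the arclengths coincide.

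Second, I would note that the assignment $\gamma\mapsto L_g\circ\gamma$ defines a bijection between the set of piecewise regular curves from $q$ to $p$ and the set of piecewise regular curves from $gq$ to $gp$, with inverse $\sigma\mapsto L_{g^{-1}}\circ\sigma$. Because $G$ with a left-invariant metric is complete (as mentioned in Section~\ref{sec: background_Lie}), the Hopf--Rinow theorem applies and the Riemannian distance between two points is the infimum, indeed a minimum, of lengths over all such connecting curves. Taking infima on both sides of the length-preserving correspondence gives $d(gq,gp)=d(q,p)$.

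There is no real obstacle in this argument: the content of the lemma is essentially the statement that isometries preserve distance, and the only item to check is the pointwise velocity-norm identity, which is immediate from left-invariance. If one prefers a more geometric phrasing, the same reasoning can be carried out via the Riemannian exponential map in a geodesically convex neighborhood, using the expression $d(x,y)=\|\exp_x^{-1}y\|$ together with the fact that the exponential map commutes with isometries.
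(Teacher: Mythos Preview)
Your proof is correct and follows essentially the same approach as the paper: both arguments use that left-translation preserves the length of curves (by left-invariance of the metric) and then pass to the infimum over connecting curves, invoking completeness so that the distance is realized by a minimizing geodesic. The paper phrases this as two inequalities obtained by pushing minimizing geodesics forward by $L_g$ and $L_{g^{-1}}$, while you phrase it as a length-preserving bijection of path spaces, but the content is the same.
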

\begin{proof}
Since $G$ is complete as a Riemannian manifold, there exists a geodesic $\gamma: [0, 1] \to G$ which minimizes the length functional $\displaystyle{L(c) = \int_0^1 \|\dot{c}(t)\| dt}$ among all smooth curves $c:[0, 1] \to G$ satisfying $\ c(0) = p, \ c(1) = q$. Moreover, we have $d(p, q) = L(\gamma)$ by equation \eqref{eq: distance-length-geodesics}. By left-invariance of the metric, we then have that $d(p, q) = L(\gamma) = L(g\gamma) \ge d(gp, gq)$, since in particular $g \gamma$ is a smooth curve such that $g\gamma(0) = gp, \ g\gamma(1) = gq$. On the other hand, there exists some geodesic $\gamma^\ast$ such that $L(\gamma^\ast) = d(gp, gq)$, and so $d(gp, gq) = L(\gamma^\ast) = L(g^{-1}\gamma^\ast) \ge d(p, q)$. It follows that $d(p, q) = d(gp, gq)$.
\end{proof}

For the purposes of collision avoidance, the particular sub-potentials $V_{jk}$ will take the form $V_{jk}(g_j, g_k) = f(d^2(g_j, g_k)),$ where $d^2: G \times G \to \R$ is the square of the Riemannian distance on $G$. That is, we have $V_{jk}(hg_j, hg_k) = V_{jk}(g_j, g_k)$ for all $h \in G$, $(j,k) \in \mathcal{E}$, from which it is clear that $V$ is left-invariant with respect to left-translation on $G^s$. 

Observe that $g_j^{-1} \grad_1 V_{jk}(g_j, g_k) = \grad_1 V_{jk}(e, g_j^{-1} g_k)$. This motivates the definition $h_{jk} = g_j^{-1} g_k$, from which we find that $\dot{h}_{jk} = -g_j^{-1} \dot{g}_j g_j^{-1} g_k + g_j^{-1} \dot{g}_k = -\xi_j h_{jk} + h_{jk}\xi_k$, where $\xi_i := g_i^{-1} \dot{g}_i$ for all $i = 1, \dots, s$. This leads to the following result:


\begin{proposition}\label{prop: reduction_left_inv}
Suppose that $Q = G^s$, where $G$ satisfies assumption \textbf{G1}. Then $g = (g_1, \dots, g_s) \in C^{\infty}([a,b], G^s)$ satisfies \eqref{eq: necessary conditions collision avoidance} if and only if $\xi^{(0)}_j := g^{-1}_j \dot{g}_j$ and $h_{jk} := g^{-1}_j g_k$ solve:
\begin{align}
    \dot{h}_{jk} &= -\xi^{(0)}_j h_{jk} + h_{jk}\xi_k^{(0)}, \label{eq: reduction modified cubic parameter coll} \\
    \dot{\xi}^{(i)}_j &= \xi^{(i+1)}_j - \nabla^\g_{\xi^{(0)}_j} \xi^{(i)}_j, \label{eq: modified cubic reduction recusion coll}\\
   \dot{\xi}^{(2)}_j + \nabla_{\xi^{(0)}_j}^\g \xi^{(2)}_j + R\big{(}\xi^{(1)}_j, \xi^{(0)}_j \big{)}\xi^{(0)}_j &= -\sum_{r \in \mathcal{N}_j} \grad_1 V_{jr}(e, h_{jr}), \label{eq: reduced modified cubic coll}
\end{align}
for $i=0,1$, and for all $j = 1, \dots, s$ and $k \in \mathcal{N}_j$.
\end{proposition}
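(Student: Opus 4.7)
The plan is to reduce the agent-wise necessary condition \eqref{eq: necessary conditions collision avoidance} by left-translating both sides by $g_j^{-1}$, and to check that under the substitutions $\xi_j^{(0)} := g_j^{-1}\dot{g}_j$ and $h_{jk} := g_j^{-1} g_k$ the result is exactly the system \eqref{eq: reduction modified cubic parameter coll}--\eqref{eq: reduced modified cubic coll}. The converse direction is immediate by reconstruction: given the $\xi_j^{(0)}$ and $h_{jk}$, recover each $g_j$ from the reconstruction equation $\dot{g}_j = g_j \xi_j^{(0)}$ with initial data on $G$, and reverse each step of the reduction.

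The left-hand side of \eqref{eq: necessary conditions collision avoidance} is already handled by Lemma \ref{lemma: cubic_red} applied to the $j^{\text{th}}$ component: left-translation by $g_j^{-1}$ sends $D_t^3 \dot{g}_j + R(D_t\dot{g}_j, \dot{g}_j)\dot{g}_j$ to $\dot{\xi}_j^{(2)} + \nabla^\g_{\xi_j^{(0)}}\xi_j^{(2)} + R(\xi_j^{(1)}, \xi_j^{(0)})\xi_j^{(0)}$, which is exactly the LHS of \eqref{eq: reduced modified cubic coll}. Rearranging the recursive definition $\xi_j^{(i)} = \dot{\xi}_j^{(i-1)} + \nabla^\g_{\xi_j^{(0)}}\xi_j^{(i-1)}$ from that lemma yields \eqref{eq: modified cubic reduction recusion coll}. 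Equation \eqref{eq: reduction modified cubic parameter coll} is obtained by directly differentiating $h_{jk} = g_j^{-1} g_k$ using $\tfrac{d}{dt}g_j^{-1} = -g_j^{-1}\dot{g}_j g_j^{-1} = -\xi_j^{(0)} g_j^{-1}$.

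The remaining ingredient is the transformation of the gradient term, for which I need the identity $g_j^{-1}\grad_1 V_{jr}(g_j, g_r) = \grad_1 V_{jr}(e, h_{jr})$. Since $V_{jr}$ is a function of $d^2(g_j, g_r)$, Lemma \ref{lemma: distance-left-inv} gives $V_{jr}(hp, hq) = V_{jr}(p, q)$ for all $h \in G$. Fixing $h$ and $q$, differentiating both sides in the first argument along $Y \in T_p G$, and using left-invariance of the metric yields
\begin{equation*}
\langle \grad_1 V_{jr}(p, q), Y\rangle = \langle \grad_1 V_{jr}(hp, hq), L_{h\ast}Y\rangle = \langle L_{h\ast}\grad_1 V_{jr}(p, q), L_{h\ast}Y\rangle.
\end{equation*}
Non-degeneracy of the metric on $T_{hp}G$ gives $\grad_1 V_{jr}(hp, hq) = L_{h\ast}\grad_1 V_{jr}(p, q)$, and specializing $h = g_j^{-1}$, $p = g_j$, $q = g_r$ delivers the required identity, completing \eqref{eq: reduced modified cubic coll}.

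The main obstacle is the transformation rule for $\grad_1 V_{jr}$: although it follows cleanly from the joint left-invariance of the metric and the Riemannian distance, one must be careful to differentiate only in the first slot and to track pushforwards consistently so as to avoid sign or index errors. Once that identity is in place, the rest of the argument is routine substitution into the reductions already established in Lemmas \ref{lemma: cov-to-covg}, \ref{lemma: covg-decomp}, and \ref{lemma: cubic_red}.
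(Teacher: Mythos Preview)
Your proposal is correct and follows essentially the same route as the paper: the discussion preceding the proposition already assembles the proof by invoking Lemma \ref{lemma: cubic_red} for the left-hand side, computing $\dot{h}_{jk}$ directly, and asserting the identity $g_j^{-1}\grad_1 V_{jk}(g_j,g_k)=\grad_1 V_{jk}(e,h_{jk})$ from the left-invariance of $V_{jk}$ (itself a consequence of Lemma \ref{lemma: distance-left-inv}). The only notable difference is that you supply a careful derivation of the gradient transformation via the metric isometry $L_{h\ast}$, whereas the paper states the identity as an observation; your treatment of the converse by reconstruction is also a small addition, but neither constitutes a different approach.
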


\begin{remark}
Proposition \ref{prop: reduction_left_inv} can be considered as a special case of Euler-Poincar\'e reduction for second order Lagrangians (that is, Lagrangians defined on the second order tangent bundle $T^2 G$). This was studied on Lie groups in \cite{colombo2014higher}, where the corresponding higher order Euler-Poincar\'e equations were obtained. Using the Riemannian formalism, we bypass the necessity to work with higher-order tangent bundles, and obtain equations evolving Lie algebra $\g$ rather than its dual $\g^\ast$. Also Proposition \ref{prop: reduction_left_inv}  can be seen as the second-order extension of the collision avoidance problem on Lie groups considered in \cite{colombo2020symmetry}.
\end{remark}
\section{Reduction on Lie Groups with Bi-invariant Metrics}
\subsection{Bi-invariant metrics}\label{subsection: bi-invariant metrics}

Now we wish to discuss another important class of Riemannian metrics on a Lie group, the so-called \textit{bi-invariant} (or \textit{$\Ad$-invariant}) metrics. These are the Riemannian metrics $\left< \cdot, \cdot\right>$ on $G$ which are both left- and right-invariant. Unlike left- and right-invariant metrics, not every Lie group $G$ admits a bi-invariant metric. The following result from \cite{LieGroupRiem} provides necessary and sufficient conditions for the existence of a bi-invariant metric.

\begin{lemma}\label{lemma: existence of bi-invariant metrics}
A connected Lie group admits a bi-invariant metric if and only if it is isomorphic to the Cartesian product of a compact Lie group and a finite-dimensional vector space. Moreover, such a metric is unique up to scalar multiplication.
\end{lemma}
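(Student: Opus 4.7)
The plan is to establish the lemma in three stages — sufficiency, necessity, and uniqueness — all of which are classical. For sufficiency, assume $G \cong K \times V$ with $K$ compact and $V$ a finite-dimensional vector space. Any inner product on $V$ is trivially bi-invariant since $V$ is abelian. On $K$, I would construct a bi-invariant metric by the standard averaging argument: choose any inner product $\langle \cdot, \cdot \rangle_0$ on $\mathfrak{k}$ and set
\begin{equation*}
\langle X, Y \rangle = \int_K \langle \Ad(k) X, \Ad(k) Y \rangle_0 \, dk,
\end{equation*}
where $dk$ is normalized Haar measure (finite because $K$ is compact). Translation-invariance of Haar measure makes this inner product $\Ad(K)$-invariant, and left-translating it across $K$ yields a bi-invariant Riemannian metric. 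The product metric on $K \times V$ is then bi-invariant on $G$.

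For necessity, suppose $G$ admits a bi-invariant metric. Bi-invariance is equivalent to $\Ad(G)$-invariance of the induced inner product on $\g$; differentiating this relation along one-parameter subgroups yields that $\ad_X$ is skew-adjoint for every $X \in \g$. Writing $\ad_X^T$ for the adjoint with respect to the inner product, so that $\ad_X^T = -\ad_X$, the Killing form satisfies
\begin{equation*}
B(X, X) = \tr(\ad_X^2) = -\tr(\ad_X \ad_X^T) \le 0,
\end{equation*}
so $B$ is negative semidefinite. I would then invoke the structure theorem for Lie algebras of compact type: $\g$ splits orthogonally as $\g = \mathfrak{z}(\g) \oplus [\g, \g]$, with $\mathfrak{z}(\g)$ abelian and $[\g, \g]$ semisimple of compact type (negative-definite Killing form). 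Integrating this splitting and applying Weyl's theorem — that a connected semisimple Lie group of compact type has compact universal cover — produces $G \cong K \times V$ with $K$ compact and $V \cong \mathbb{R}^m$.

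For uniqueness I would reduce to the simple factors: since the adjoint representation of a simple Lie algebra is irreducible, Schur's lemma forces any $\Ad$-invariant symmetric bilinear form to be a scalar multiple of the Killing form, giving uniqueness up to positive scalar on each simple compact summand. The main obstacle is the necessity direction, specifically the passage from the Lie-algebraic decomposition $\g = \mathfrak{z}(\g) \oplus [\g, \g]$ to a genuine global direct product $G \cong K \times V$. One has to verify that the connected subgroup integrating $[\g, \g]$ is closed and compact — this is precisely where Weyl's theorem and control of the discrete kernel of the universal cover enter — and that it commutes with and complements the connected abelian subgroup integrating the center, producing a true direct product rather than merely a local one. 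Everything else reduces to standard averaging and representation-theoretic arguments; the delicate compactness/quotient step is the essential content.
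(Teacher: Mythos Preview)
The paper does not prove this lemma; it merely quotes it from \cite{LieGroupRiem}. So there is no argument in the paper to compare against, and your sketch of the existence claim follows essentially the standard Milnor-style route. Two substantive comments remain.

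\emph{The global step in the necessity direction is mis-aimed.} You propose to obtain $V \cong \mathbb{R}^m$ by integrating $\mathfrak{z}(\g)$. This fails already for a torus: if $G = T^n$ then $\mathfrak{z}(\g) = \g \cong \mathbb{R}^n$, but the connected subgroup it generates is $T^n$ itself, not a vector space. The identity component of the center of $G$ is an abelian Lie group $Z(G)^0 \cong T^k \times \mathbb{R}^m$, and the toral part must be absorbed into the compact factor $K$, not into $V$. The cleaner argument is the one you allude to only implicitly: since the inner product on $\g$ is $\Ad$-invariant, $\Ad(G)$ sits inside the (compact) orthogonal group of $(\g, \langle\cdot,\cdot\rangle)$ and is therefore compact; one then analyses the extension $1 \to Z(G) \to G \to \Ad(G) \to 1$ together with the splitting $Z(G)^0 \cong T^k \times \mathbb{R}^m$ to extract the $\mathbb{R}^m$ factor. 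Your outline identifies the right obstacle but points the resolution at the wrong subgroup.

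\emph{The uniqueness assertion cannot be proved because it is false as stated.} Your Schur's-lemma argument correctly yields uniqueness up to a positive scalar on each simple ideal, but that is strictly weaker than the lemma's claim of a single global scalar. Counterexamples are immediate: on $G = \mathbb{R}^2$ every inner product is bi-invariant (the group is abelian), yet these are not all proportional; on $G = \SO(3) \times \SO(3)$ one may scale the two factors independently. The correct statement is uniqueness up to scalar on each simple compact factor together with an arbitrary inner product on the abelian part --- which is exactly what your argument delivers. So the gap here is in the lemma, not in your reasoning.
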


Despite this limitation, many important examples of Lie groups satisfy the conditions. In particular, $\SO(3)$ is a compact Lie group, and $\mathbb{R}^3$ is a finite-dimensional vector space. Bi-invariant metrics have many nice properties that greatly simplify calculations in practice. First, it is clear that for all $g \in G$,  $X, Y \in T_gG$, we have $\left<X, Y\right> = \left<gXg^{-1}, gYg^{-1}\right> = \left<\Ad_g X, \Ad_g Y\right>$ where $\Ad:G\times\mathfrak{g}\to\mathfrak{g}$ is the adjoint operator (this is why such metrics are also called $\Ad$-invariant). Let $\xi, \eta, \sigma \in \g$. Then, $\left<\eta, \sigma \right> = \left<\Ad_{\Exp(t\xi)} \eta, \Ad_{\Exp(t\xi)} \sigma\right>$. Differentiating at $t =0$, we see that $0 = \left<\ad_{\xi}\eta, \sigma\right> + \left<\eta, \ad_{\xi}\sigma\right>$, which implies that $\left<\ad^\dagger_\xi \eta, \sigma\right> = \left<-\ad_\xi \eta, \sigma\right>$. Hence, $\ad^\dagger_\xi \eta = -\ad_\xi \eta = [\eta, \xi]$ for all $\xi, \eta \in \g$. 

\begin{lemma}\label{lemma: g-connection bi-inv metric}
Consider a Lie group $G$ equipped with a bi-invariant metric. Let $\nabla$ be the Levi-Civita connection and $\nabla^\g$ be the corresponding Riemannian $\g$-connection. Then:
\begin{enumerate}
    \item $\nabla^\g_\xi \eta = \frac12 [\xi, \eta]$,
    \item $R(\xi, \eta)\sigma = \frac14[[\xi, \eta],\sigma]$,
\end{enumerate}
for all $\xi, \eta, \sigma \in \g$.
\end{lemma}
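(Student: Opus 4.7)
The plan is to derive both identities by direct computation, leveraging Lemma \ref{lemma: covg-decomp} together with the bi-invariance identity $\ad^\dagger_\xi\eta = -\ad_\xi\eta$ established in the paragraph preceding the statement.

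First I would tackle part (1). By Lemma \ref{lemma: covg-decomp},
$$\nabla^\g_\xi\eta = \tfrac12\bigl([\xi,\eta]_\g - \ad^\dagger_\xi\eta - \ad^\dagger_\eta\xi\bigr).$$
The bi-invariance formula yields $\ad^\dagger_\xi\eta = -[\xi,\eta]$ and $\ad^\dagger_\eta\xi = -[\eta,\xi] = [\xi,\eta]$, so the two $\ad^\dagger$-terms combine to cancel the ``extra'' $[\xi,\eta]$, leaving $\tfrac12[\xi,\eta]$. This is a one-line substitution.

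For part (2), my strategy is to transport the curvature computation from $G$ to $\g$ via the Lie-algebra-to-left-invariant-vector-fields isomorphism $\phi$ of Section \ref{sec: background_Lie}. Because the Levi-Civita connection of a left-invariant metric preserves $\mathfrak{X}_L(G)$ on left-invariant arguments (as seen from the Koszul formula), one has $\nabla_{\phi(\xi)}\phi(\eta) = \phi(\nabla^\g_\xi\eta)$ globally, and since $[\phi(\xi),\phi(\eta)] = \phi([\xi,\eta]_\g)$, the curvature endomorphism evaluated on left-invariant fields reduces at the identity to its algebraic analogue
$$R(\xi,\eta)\sigma = \nabla^\g_\xi\nabla^\g_\eta\sigma - \nabla^\g_\eta\nabla^\g_\xi\sigma - \nabla^\g_{[\xi,\eta]}\sigma.$$
Applying part (1) to each term produces three double brackets with explicit numerical coefficients.

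The remaining — and only non-trivial — step is to invoke the Jacobi identity on $\{\xi,\eta,\sigma\}$ to combine $[\xi,[\eta,\sigma]] - [\eta,[\xi,\sigma]]$ into a single nested bracket involving $[[\xi,\eta],\sigma]$. Substituting back collapses the three contributions into the claimed constant multiple of $[[\xi,\eta],\sigma]$. I do not foresee any genuine analytic obstacle; the main challenge is purely algebraic bookkeeping — keeping signs consistent through the $\ad^\dagger = -\ad$ substitution and the Jacobi rearrangement — so that the final coefficient matches the stated form.
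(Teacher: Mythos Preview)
The paper states this lemma without proof (the preceding paragraph only supplies the key input $\ad^\dagger_\xi\eta=-\ad_\xi\eta$), so there is no argument to compare against directly. Your plan is the standard derivation and is structurally correct: part (1) is an immediate substitution of $\ad^\dagger=-\ad$ into Lemma~\ref{lemma: covg-decomp}, and part (2) follows from expressing the curvature of left-invariant fields through $\nabla^\g$ and then applying part (1) together with the Jacobi identity.

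One warning about the ``algebraic bookkeeping'' you flag at the end: carrying the computation through explicitly with the paper's curvature convention $R(X,Y)Z=\nabla_X\nabla_Y Z-\nabla_Y\nabla_X Z-\nabla_{[X,Y]}Z$ gives
\[
R(\xi,\eta)\sigma=\tfrac14[\xi,[\eta,\sigma]]-\tfrac14[\eta,[\xi,\sigma]]-\tfrac12[[\xi,\eta],\sigma]
=\tfrac14[[\xi,\eta],\sigma]-\tfrac12[[\xi,\eta],\sigma]=-\tfrac14[[\xi,\eta],\sigma],
\]
i.e.\ the opposite sign to the one printed in the lemma. This is the classical formula for bi-invariant metrics, and it is in fact the sign the paper itself \emph{uses} in the proof of Corollary~\ref{cor: reduction_bi_inv}, where the curvature contribution enters as $-\tfrac14[[\xi^{(1)}_j,\xi_j],\xi_j]$. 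So the mismatch is a typo in the lemma statement, not a flaw in your approach; just be aware that your computation will produce $-\tfrac14$ rather than the stated $+\tfrac14$.
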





\subsection{Reduction on Lie Groups with Bi-invariant Metrics}\label{sec: reduction cubics bi-invariant metrics}


Necessary conditions for optimality can be simplified dramatically in the case that the metric $\left<\cdot, \cdot\right>$ is bi-invariant. In particular, we view equipping $G$ with such a bi-invariant metric as a strengthening of assumption \textbf{G1}:
\begin{quote}
\noindent \textbf{G2} \textbf{:} $G$ is a connected Lie group equipped with a bi-invariant Riemannian metric and corresponding Levi-Civita connection $\nabla$.
\end{quote}

\noindent Using Lemma \ref{lemma: g-connection bi-inv metric}, we obtain the following corollary to Proposition \ref{prop: reduction_left_inv}.
\begin{corollary}\label{cor: reduction_bi_inv}
Suppose that $G^s$ satisfies assumption \textbf{G2}. Then $g_j \in \Omega$ satisfies that $\xi^{(0)}_j := g^{-1}_j \dot{g}_j$ and $h_{jk} := g^{-1}_j g_k$ solve:
\begin{align}
    \dot{h}_{jk} &= -\xi^{(0)}_j h_{jk} + h_{jk}\xi_k^{(0)} \label{eqqalpha_bi} \\
    \dddot{\xi_j} + \big{[}\xi_j, \ddot{\xi}_j\big{]} &= -\sum_{r \in \mathcal{N}_j} \grad_1 V_{jr}(e, h_{jr}). \label{eqq3}
\end{align}
\end{corollary}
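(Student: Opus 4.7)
The plan is to specialize Proposition \ref{prop: reduction_left_inv} to the bi-invariant setting by substituting $\nabla^\g_\xi\eta = \tfrac12[\xi,\eta]$ and the bi-invariant curvature formula from Lemma \ref{lemma: g-connection bi-inv metric}. The kinematic identity \eqref{eq: reduction modified cubic parameter coll} does not involve $\nabla$ or $R$ and therefore transfers verbatim to \eqref{eqqalpha_bi}, and the right-hand side of \eqref{eq: reduced modified cubic coll} is unaffected because only left-invariance of $V$ enters there. Thus the content of the corollary is to collapse the recursion \eqref{eq: modified cubic reduction recusion coll} together with the left-hand side of \eqref{eq: reduced modified cubic coll} into the single third-order equation \eqref{eqq3} for $\xi_j := \xi^{(0)}_j$.

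First I would unwind the recursion in closed form. By skew-symmetry, $\xi^{(1)}_j = \dot\xi_j + \tfrac12[\xi_j,\xi_j] = \dot\xi_j$, and applying the recursion once more gives $\xi^{(2)}_j = \ddot\xi_j + \tfrac12[\xi_j,\dot\xi_j]$. Differentiating and using $[\dot\xi_j,\dot\xi_j]=0$ yields $\dot\xi^{(2)}_j = \dddot\xi_j + \tfrac12[\xi_j,\ddot\xi_j]$, and a direct substitution gives $\nabla^\g_{\xi_j}\xi^{(2)}_j = \tfrac12[\xi_j,\ddot\xi_j] + \tfrac14[\xi_j,[\xi_j,\dot\xi_j]]$. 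The curvature term $R(\xi^{(1)}_j,\xi^{(0)}_j)\xi^{(0)}_j = R(\dot\xi_j,\xi_j)\xi_j$ reduces via Lemma \ref{lemma: g-connection bi-inv metric} to a multiple of the iterated bracket $[[\dot\xi_j,\xi_j],\xi_j]$, which by two applications of skew-symmetry equals $[\xi_j,[\xi_j,\dot\xi_j]]$.

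Adding the three contributions on the left-hand side of \eqref{eq: reduced modified cubic coll}, the single-bracket pieces combine to $[\xi_j,\ddot\xi_j]$, and the two surviving iterated-bracket pieces cancel exactly, leaving $\dddot\xi_j + [\xi_j,\ddot\xi_j]$; setting this equal to the (unchanged) gradient sum gives \eqref{eqq3}. The main obstacle is precisely the cancellation of the two $[\xi_j,[\xi_j,\dot\xi_j]]$-type terms coming from $\nabla^\g_{\xi_j}\xi^{(2)}_j$ and $R(\dot\xi_j,\xi_j)\xi_j$: the algebra is routine, but it is sign- and coefficient-sensitive and must be paired carefully with the bi-invariant curvature formula of Lemma \ref{lemma: g-connection bi-inv metric}. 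Once this bookkeeping is in place, Corollary \ref{cor: reduction_bi_inv} follows at once from Proposition \ref{prop: reduction_left_inv}.
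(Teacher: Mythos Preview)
Your proposal is correct and follows essentially the same approach as the paper's proof: both start from Proposition \ref{prop: reduction_left_inv}, substitute the bi-invariant formulas from Lemma \ref{lemma: g-connection bi-inv metric}, unwind the recursion linking $\xi^{(i)}_j$ to ordinary time-derivatives of $\xi_j$, and observe that the iterated-bracket terms cancel. The only cosmetic difference is that the paper expresses $\dot\xi_j,\ddot\xi_j,\dddot\xi_j$ in terms of $\xi^{(i)}_j$ and substitutes back, whereas you express $\xi^{(i)}_j$ in terms of derivatives of $\xi_j$; the computations are identical.
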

\begin{proof}
From Proposition \ref{prop: reduction_left_inv}, it follows that if we take $\xi^{(0)} = \xi$, then $g \in \Omega$ solves \eqref{eq: necessary conditions collision avoidance} if and only if:
\begin{align*}
    \dot{h}_{jk} &= -\xi^{(0)}_j h_{jk} + h_{jk}\xi_k^{(0)}, \\
   \hbox{for  $i=0,1,$ }\qquad \dot{\xi}^{(i)}_j &= \xi^{(i+1)}_j - \frac12\big{[}\xi_j, \xi_j^{(i)}\big{]}\\
   \dot{\xi_j}^{(2)} + \frac12\big{[}\xi_j, \xi_j^{(2)}\big{]} - \frac14 \big{[}\big{[}\xi_j^{(1)}, \xi_j \big{]}, \xi_j \big{]} &=  -\sum_{r \in \mathcal{N}_j} \grad_1 V_{jr}(e, h_{jr}).
\end{align*}
Now observe that, from Lemmas \ref{lemma: cubic_red} and \ref{lemma: g-connection bi-inv metric}:
\begin{align*}
    \dot{\xi_j} &= \xi_j^{(1)} - \frac12\big{[} \xi_j, \xi_j \big{]} = \xi_j^{(1)}, \\
    \ddot{\xi_j} &= \dot{\xi_j}^{(1)} = \xi_j^{(2)} - \frac12\big{[}\xi_j, \dot{\xi}_j \big{]}, \\
    \dddot{\xi_j} &= \dot{\xi}_j^{(2)} - \frac12 \big{[}\xi_j, \ddot{\xi}_j\big{]}.
\end{align*}
Substituting these into the necessary conditions, we obtain \eqref{eqqalpha_bi} and \eqref{eqq3}.
\end{proof}

\bibliography{ifacconf}             

\begin{thebibliography}{29}
\providecommand{\natexlab}[1]{#1}
\providecommand{\url}[1]{\texttt{#1}}
\providecommand{\urlprefix}{URL }
\expandafter\ifx\csname urlstyle\endcsname\relax
  \providecommand{\doi}[1]{doi:\discretionary{}{}{}#1}\else
  \providecommand{\doi}{doi:\discretionary{}{}{}\begingroup \urlstyle{rm}\Url}\fi

\bibitem[{Assif et~al.(2018)Assif, Banavar, Bloch, Camarinha, and Colombo}]{mishal}
Assif, M., Banavar, R., Bloch, A., Camarinha, M., and Colombo, L.J. (2018).
\newblock Variational collision avoidance problems on riemannian manifolds.
\newblock \emph{Proceedings of the 2018 IEEE International Conference on Decision and Control}, 2791--2796.

\bibitem[{Bloch et~al.(2017)Bloch, Camarinha, and Colombo}]{BlCaCoCDC}
Bloch, A., Camarinha, M., and Colombo, L.J. (2017).
\newblock Variational obstacle avoidance on riemannian manifolds.
\newblock \emph{Proceedings of the 2017 IEEE International Conference on Decision and Control}, 146--150.

\bibitem[{Bloch et~al.(2021{\natexlab{a}})Bloch, Camarinha, and Colombo}]{BlCaCoIJC}
Bloch, A., Camarinha, M., and Colombo, L.J. (2021{\natexlab{a}}).
\newblock Dynamic interpolation for obstacle avoidance on riemannian manifolds.
\newblock \emph{International Journal of Control}, 94(3), 588--600.

\bibitem[{Bloch et~al.(2021{\natexlab{b}})Bloch, Camarinha, and Colombo}]{point}
Bloch, A., Camarinha, M., and Colombo, L.J. (2021{\natexlab{b}}).
\newblock Variational point-obstacle avoidance on riemannian manifolds.
\newblock \emph{Mathematics of Control, Signals, and Systems}, 33, 109--121.

\bibitem[{Bloch(2003)}]{Bl}
Bloch, A. (2003).
\newblock \emph{Nonholonomic Mechanics and Control}.
\newblock Interdisciplinary Applied Mathematics Series, 24. Springer-Verlag.

\bibitem[{Boothby(2003)}]{Boothby}
Boothby, W.M. (2003).
\newblock \emph{An introduction to differentiable manifolds and Riemannian geometry, Revised}, volume 120.
\newblock Gulf Professional Publishing.

\bibitem[{Camarinha et~al.(1995)Camarinha, Leite, and Crouch}]{marg}
Camarinha, M., Leite, F.S., and Crouch, P. (1995).
\newblock Splines of class $c^k$ on non-euclidean spaces.
\newblock \emph{IMA Journal of Mathematical Control \& Information}, 12, 299--410.

\bibitem[{Chandrasekaran et~al.(2020)Chandrasekaran, Colombo, Camarinha, Banavar, and Bloch}]{sh}
Chandrasekaran, R., Colombo, L.J., Camarinha, M., Banavar, R., and Bloch, A. (2020).
\newblock Variational collision and obstacle avoidance of multi-agent systems on riemannian manifolds.
\newblock \emph{Proceedings of the 2020 European Control Conference}.

\bibitem[{Colombo and Goodman(2020)}]{CoGo20}
Colombo, L. and Goodman, J. (2020).
\newblock A decentralized strategy for variational collision avoidance on complete riemannian manifolds.
\newblock \emph{Proceedings of the 2020 Portuguese Conference on Automatic Control}, 363--372.

\bibitem[{Colombo and Goodman(2023)}]{colombo2023existence}
Colombo, L. and Goodman, J. (2023).
\newblock Existence of global minimizer for elastic variational obstacle avoidance problems on riemannian manifolds.
\newblock In \emph{International Conference on Geometric Science of Information}, 81--88. Springer.

\bibitem[{Colombo and Martin~de Diego(2014)}]{colombo2014higher}
Colombo, L. and Martin~de Diego, D. (2014).
\newblock Higher-order variational problems on lie groups and optimal control applications.
\newblock \emph{Journal of Geometric Mechanics}, 6(4).

\bibitem[{Colombo and Dimarogonas(2020)}]{colombo2020symmetry}
Colombo, L.J. and Dimarogonas, D.V. (2020).
\newblock Symmetry reduction in optimal control of multiagent systems on lie groups.
\newblock \emph{IEEE Transactions on Automatic Control}, 65(11), 4973--4980.

\bibitem[{Gallier and Quaintance(2020)}]{LieGroupRiem}
Gallier, J. and Quaintance, J. (2020).
\newblock \emph{Differential Geometry and Lie Groups}, volume~13.
\newblock Springer.

\bibitem[{Giambò et~al.(2002)Giambò, Giannoni, and Piccione}]{Giambo}
Giambò, R., Giannoni, F., and Piccione, P. (2002).
\newblock An analytical theory for riemannian cubic polynomials.
\newblock \emph{IMA Journal of Math, Control, and Information}, 19(4), 445--460.

\bibitem[{Giambò et~al.(2004)Giambò, Giannoni, and Piccione}]{RiemannianPoly}
Giambò, R., Giannoni, F., and Piccione, P. (2004).
\newblock Optimal control on riemannian manifolds by interpolation.
\newblock \emph{Mathematics of Control, Signal and Systems}, 16(4), 278--296.

\bibitem[{Goodman(2022)}]{goodman2022sufficient}
Goodman, J.R. (2022).
\newblock Local minimizers for variational obstacle avoidance on riemannian manifolds.
\newblock \emph{Journal of Geometric Mechanics}, 15(1), 59--72.

\bibitem[{Goodman and Colombo(2021)}]{goodman2021obstacle}
Goodman, J.R. and Colombo, L.J. (2021).
\newblock Variational obstacle avoidance with applications to interpolation problems in hybrid systems.
\newblock \emph{IFAC-PapersOnLine}, 54(19), 82--87.

\bibitem[{Goodman and Colombo(2022{\natexlab{a}})}]{goodman2022collision}
Goodman, J.R. and Colombo, L.J. (2022{\natexlab{a}}).
\newblock Collision avoidance of multiagent systems on riemannian manifolds.
\newblock \emph{SIAM Journal on Control and Optimization}, 60(1), 168--188.

\bibitem[{Goodman and Colombo(2023)}]{goodman2022reduction}
Goodman, J.R. and Colombo, L.J. (2023).
\newblock Reduction by symmetry in obstacle avoidance problems on riemannian manifolds.
\newblock \emph{SIAM Journal on Applied Algebra and Geometry}.

\bibitem[{Goodman and Colombo(2022{\natexlab{b}})}]{goodman2022geometric}
Goodman, J. and Colombo, L. (2022{\natexlab{b}}).
\newblock Geometric control of two quadrotors carrying a rigid rod with elastic cables.
\newblock \emph{Journal of Nonlinear Science}, 32(5), 65.

\bibitem[{Goodman et~al.(2023)Goodman, Beckers, and Colombo}]{goodman2023geometric}
Goodman, J.R., Beckers, T., and Colombo, L.J. (2023).
\newblock Geometric control for load transportation with quadrotor uavs by elastic cables.
\newblock \emph{IEEE Transactions on Control Systems Technology}.

\bibitem[{Grizzle and Marcus(1985)}]{grizzle1985structure}
Grizzle, J. and Marcus, S. (1985).
\newblock The structure of nonlinear control systems possessing symmetries.
\newblock \emph{IEEE Transactions on Automatic Control}, 30(3), 248--258.

\bibitem[{Jost(2008)}]{Jost}
Jost, J. (2008).
\newblock \emph{Riemannian geometry and geometric analysis}.
\newblock Springer.

\bibitem[{Jurdjevic(1997)}]{jurdjevic1997geometric}
Jurdjevic, V. (1997).
\newblock \emph{Geometric control theory}.
\newblock Cambridge university press.

\bibitem[{Koon and Marsden(1997)}]{koon1997optimal}
Koon, W.S. and Marsden, J.E. (1997).
\newblock Optimal control for holonomic and nonholonomic mechanical systems with symmetry and lagrangian reduction.
\newblock \emph{SIAM Journal on Control and Optimization}, 35(3), 901--929.

\bibitem[{Krishnaprasad(1993)}]{krishnaprasad1993optimal}
Krishnaprasad, P.S. (1993).
\newblock \emph{Optimal control and Poisson reduction}.
\newblock Institute for Systems Research.

\bibitem[{Krishnaprasad(2004)}]{justh2004equilibria}
Krishnaprasad, P. (2004).
\newblock Equilibria and steering laws for planar formations.
\newblock \emph{Systems \& control letters}, 52(1), 25--38.

\bibitem[{Leonard and Krishnaprasad(1995)}]{leonard1995motion}
Leonard, N.E. and Krishnaprasad, P.S. (1995).
\newblock Motion control of drift-free, left-invariant systems on lie groups.
\newblock \emph{IEEE Transactions on Automatic control}, 40(9), 1539--1554.

\bibitem[{Noakes et~al.(1989)Noakes, Heinzinger, and Paden}]{noakes}
Noakes, L., Heinzinger, G., and Paden, B. (1989).
\newblock Cubic splines on curved spaces.
\newblock \emph{IMA Journal of Mathematical Control and Information}, 6(4), 465--473.

\end{thebibliography}
\end{document}
Perhaps even more important than the simplified form that the reduced equations take under assumption \textbf{G2}, we have from Theorem \ref{thm: exponential maps equivalence bi-inv} that the restriction of the Riemannian exponential map to the Lie algebra, $\exp_e$, agrees with the Lie exponential map $\Exp: \g \to G$. In particular, under assumption \textbf{DIST}, equation \eqref{eq: gradient-potential} takes the form
\begin{equation}\label{potential_Log}
    \grad_1 V_{\ext}(e, h(t)) = -2f'(\|\Log(h(t))\|^2)\Log(h(t)),
\end{equation} 
where $\Log: \g \to G$ is the Logarithmic map, as discussed in Section \ref{Section: Lie groups}. As seen in Lemma \ref{lemma: Log SO(3)}, the logarithmic map can be calculated explicitly in certain special cases, such as $G = \SO(3)$.

In Example \ref{ex: Rigid body metrics SO(3)}, we saw that the left-invariant metric for a rigid body is in-fact bi-invariant in the case that the coefficient of inertia matrix $\mathbb{M}$ (or equivalently that the moment of inertia tensor $\mathbb{J}$) is a scalar multiple of the identity. This is akin to the rigid body being symmetric about all of its axes, which is a strong assumption that typically does not hold in application. Hence, even in the case that $G$ admits a bi-invariant metric, this metric may not correspond to the kinetic energy of the physical system that we are interested in studying, and a left-invariant metric must be used anyway. However, as we will see, we may still take advantage of the bi-invariant metric to design our artificial potential. Consider the following alternative assumption to \textbf{G1}:

\begin{quote}
\noindent \textbf{G3} \textbf{:} $G$ is a connected Lie group equipped with a left-invariant metric $\left< \cdot, \cdot\right>$, with corresponding Levi-Civita connection $\nabla$. Moreover, $G$ admits a bi-invariant Riemannian metric $\left< \cdot, \cdot\right>_{\Bi}$, and $\beta: \g \to \g$ is the unique linear endomorphism such that $\left<\xi, \eta\right>_{\Bi} = \left< \beta(\xi), \eta\right>$ for all $\xi, \eta \in \g$.
\end{quote}

Let $h \in G$ and $\eta \in \g$.  Observe that:
\begin{align*}
    \left<\grad_1 V_{\ext}(e, h), \eta\right> &= \frac{d}{ds}\Big{\vert}_{s=0} V_{\ext}(\Exp(s \eta), h) \\
    &= \left<\grad_1^{\Bi} V_{\ext}(e, h), \eta\right>_{\Bi} \\
    &= \left<\beta(\grad_1^{\Bi} V_{\ext}(e, h)), \eta\right>,
\end{align*}
for all $s \in (-\epsilon, \epsilon)$, where $\grad_1^{\Bi} V_{\ext}(e, h)$ denotes the gradient vector field of $V_{\ext}$ with respect to its first argument and $\left<\cdot, \cdot\right>_{\Bi}$. It follows that $\grad_1 V_{\ext}(e, h) = \beta(\grad_1^{\Bi} V_{\ext}(e, h))$ for all $h \in G.$ Hence, we have the following corollary to Proposition \ref{prop: reduction_left_inv} in the case that assumption \textbf{G3} is satisfied. 

\begin{corollary}\label{cor: reduction left-bi_inv}
Suppose that $G$ satisfies assumption \textbf{G3}. Then $g \in \Omega$ is a modified Riemannian cubic if and only if $\xi^{(0)} := g^{-1} \dot{g}$ and $h := g^{-1}g_0$ solve:
\begin{align}
    \dot{h} &= -\xi^{(0)} h, \label{eq: reduction modified cubic parameter left-bi} \\
    \dot{\xi}^{(i)} &= \xi^{(i+1)} - \nabla^\g_{\xi^{(0)}} \xi^{(i)}, \qquad i=0,1, \label{eq: modified cubic reduction recusion left-bi}\\
   \dot{\xi}^{(2)} + \nabla_{\xi^{(0)}}^\g \xi^{(2)} + R\big{(}\xi^{(1)}, \xi^{(0)} \big{)}\xi^{(0)} &= -\beta(\grad_1^{\Bi} V_{\ext}(e, h)), \label{eq: reduced modified cubic left-bi}
\end{align}
on $[a, b]$, together with the boundary conditions $\xi^{(0)}(a) = g_a^{-1} v_a, \ \xi^{(0)}(b) = g_b^{-1} v_b$, and $h(a) = g_a^{-1} g_0, \ h(b) = g_b^{-1} g_0$.
\end{corollary}

In the context of Corollary \ref{cor: reduction left-bi_inv}, we now design the artificial potential to be of the form $V(g) = f(d_{\Bi}^2(g, g_0))$, where $d_{\Bi}: G \times G \to \R$ is the Riemannian distance function corresponding to the bi-invariant metric $\left<\cdot, \cdot\right>_{\Bi}$. The extended potential is similarly defined by $V_{\ext}(g, h) = f(d_{\Bi}^2(g, h))$ for all $g, h \in G$. Under assumption \textbf{DIST}, equation \eqref{potential_Log} applies to $\grad_1^{\Bi} V_{\ext}(e, h(t))$. That is,
\begin{equation}\label{potential_Log_left-bi}
    \beta(\grad_1^{\Bi} V_{\ext}(e, h(t))) = -2f'(\|\Log(h(t))\|^2)\beta(\Log(h(t))),
\end{equation}

\begin{example}\label{ex: rigid_body_cubic_reduction}
We return to the example of the rigid body modelled on $\SO(3)$. As in Example \ref{ex: Rigid body metrics SO(3)}, we equip $\SO(3)$ with the left-invariant metric $\left<\dot{R}_1, \dot{R}_2\right> = \tr(\dot{R}_1 \mathbb{M} \dot{R}_2^T)$ for all $R \in \SO(3), \dot{R}_1, \dot{R}_2 \in T_R \SO(3)$. Using the hat map \eqref{eq: hat map}, this takes the form of $\left<\hat{\Omega}, \hat{\eta}\right> = \Omega^T \mathbb{J} \eta$ \eqref{eq: left-invariant metric so(3)} on the Lie algebra $\so(3)$. we see the following:
\begin{align*}
    \left<\ad_{\hat{\Omega}} \hat{\eta}, \hat{\sigma}\right> &= \mathbb{J}(\Omega \times \eta) \cdot \sigma \\
    \left<\ad^\dagger_{\hat{\Omega}} \hat{\eta}, \hat{\sigma}\right> &= (\mathbb{J} \eta \times \Omega) \cdot \sigma
\end{align*}
for all $\Omega, \eta, \sigma \in \R^3$. It follows from Lemma \ref{lemma: covg-decomp} that the Riemannian $\g$-connections can be decomposed as:
\begin{equation}\label{eq: covg SO3}
    \left<\nabla^{\g}_{\hat{\Omega}} \hat{\eta}, \hat{\sigma}\right> = \frac12 \left(\mathbb{J}(\Omega \times \eta) - \mathbb{J}\eta \times \Omega - \mathbb{J}\Omega \times \eta \right) \cdot \sigma.
\end{equation}
We also consider the bi-invariant metric $\left< \cdot, \cdot \right>_{\Bi}$ defined by $\left< \dot{R}_1, \dot{R}_2\right>_{\Bi} = \tr(\dot{R}_1 \dot{R}_2^T)$ for all $R \in \SO(3), \dot{R}_1, \dot{R}_2 \in T_R \SO(3)$. Through the hat isomorphism, we find that $\left<\hat{\Omega}_1, \hat{\Omega}_2\right>_{\Bi} = \Omega_1^T \Omega_2$, which is just the standard inner product on $\R^3$. From this, it is clear that $\beta(\hat{\Omega}) = \hat{\Omega}\mathbb{M}^{-1}$ for all $\hat{\Omega} \in \so(3)$, since 
\begin{equation*}
    \left< \beta(\hat{\Omega}_1), \hat{\Omega}_2\right> = \tr((\hat{\Omega}_1 \mathbb{M}^{-1})\mathbb{M}\hat{\Omega}_2^T) = \tr(\hat{\Omega}_1\hat{\Omega}_2^T) = \left< \hat{\Omega}_1, \hat{\Omega}_2\right>_{\Bi}
\end{equation*} for all $\hat{\Omega}_1, \hat{\Omega}_2 \in \so(3)$. Moreover, considering the restriction of $\beta$ to $\so(3)$, we see that
\begin{equation*}
    \Omega \cdot \sigma = \left<\hat{\Omega}, \hat{\sigma}\right>_{\Bi} = \left<\beta(\hat{\Omega}), \hat{\sigma}\right> = (\beta(\hat{\Omega})^\vee)^T \mathbb{J} \sigma = (\mathbb{J} \beta(\hat{\Omega})^\vee) \cdot \sigma = \left<\widehat{\mathbb{J} \beta(\Omega)^\vee}, \hat{\sigma}\right>_{\Bi},
\end{equation*}
from which it follows that $\beta(\hat{\Omega}) = \widehat{\mathbb{J}^{-1} \Omega}$ for all $\Omega \in \R^3$.

Consider a point-obstacle $R_0 \in \SO(3)$, and the artificial potential $V(R) = f(d_{\Bi}^2(R, R_0))$, where $f: \R \to \R$ is smooth and non-negative. The extended artificial potential satisfies $V_{\ext}(R_1, R_2) = f(d_{\Bi}^2(R_1, R_2))$ for all $R_1, R_2 \in \SO(3)$. It follows from Lemma \ref{lemma: Log SO(3)} and Equation \eqref{potential_Log} that
\begin{equation}\label{eq: gradient potential SO3}
    \grad_1^{\Bi} V_{\ext}(e, H) = -\frac{2f'(\phi(H)^2)\phi(H)}{\sin(\phi(H))}(H - H^T).
\end{equation}
for all $H \in \SO(3)$ such that $\tr(H) \ne -1$. Hence, from Corollary \ref{cor: reduction left-bi_inv}, we have the following Proposition:

\begin{proposition}\label{prop: reduced cubics SO(3)}
Consider a rigid body with moment of inertia tensor $\mathbb{J}$ and attitude $R \in \Omega_{(R_a, \hat{\Omega}_a), (R_b, \hat{\Omega}_b)}^{[a, b]}(\SO(3))$. Then, $R$ is a modified Riemannian cubic if and only if $\hat{\Omega}^{(0)} := R^T \dot{R}$ and $H := R^T R_0$ solve:
\small{\begin{align*}
    \dot{H} &= -\hat{\Omega}^{(0)}H, \\
    \mathbb{J} \dot{\Omega}^{(i)} = \mathbb{J} \Omega^{(i+1)} - \frac12\Big{(}\mathbb{J}(\Omega^{(0)} &\times \Omega^{(i)}) - \mathbb{J}\Omega^{(0)} \times \Omega^{(i)} - \mathbb{J}\Omega^{(i)}\times \Omega^{(0)} \Big{)}, \qquad i = 0,1,\\
    \mathbb{J}\dot{\Omega}^{(2)} + \frac12\Big{(}\mathbb{J}(\Omega^{(0)} \times \Omega^{(2)}) - \mathbb{J}\Omega^{(0)}\times \Omega^{(2)} &- \mathbb{J}\Omega^{(2)} \times \Omega^{(0)} \Big{)} + \mathbb{J}\left(R(\hat{\Omega}^{(1)}, \hat{\Omega}^{(0)})\hat{\Omega}^{(0)} \right)^\vee = \frac{2f'(\phi(H)^2)\phi(H)}{\sin(\phi(H))}(H - H^T)^\vee,
\end{align*}}
\normalsize on $[a, b]$, together with the boundary conditions $\Omega^{(0)}(a) = R_a^T \hat{\Omega}_a, \ \Omega^{(0)}(b) = R_b^T \hat{\Omega}_b$ and $H(a) = R_a^T R_0, \ H(b) = R_b^T R_0$, and as long as $\tr(H) \ne -1$.
\end{proposition}
Observe that the curvature term $\left(R(\hat{\Omega}^{(1)}, \hat{\Omega}^{(0)})\hat{\Omega}^{(0)} \right)^\vee$ can be written in terms of $\Omega^{(0)}$ and $\Omega^{(1)}$ via the relation $R(\hat{\Omega}^{(1)}, \hat{\Omega}^{(0)})\hat{\Omega}^{(0)} = \nabla^\g_{\hat{\Omega}^{(1)}} \nabla^\g_{\hat{\Omega}^{(0)}} \hat{\Omega}^{(0)} - \nabla^\g_{\hat{\Omega}^{(0)}} \nabla^\g_{\hat{\Omega}^{(1)}} \hat{\Omega}^{(0)} - \nabla^\g_{\widehat{\Omega^{(1)} \times \Omega^{(0)}}} \hat{\Omega}^{(0)}$ together with equation \eqref{eq: covg SO3}. Moreover, it follows from Corollary \ref{cor: point_obstacle_avoidance} that (when feasible) the point-obstacle $R_0$ can be avoided within some desired tolerance by choosing $f$ appropriately. For example, using an avoidance family, as in Definition \ref{def: avoidance family}.

In the case that $\mathbb{J} = I$, we obtain the following corollary from Corollary \ref{cor: reduction_bi_inv}:
\begin{corollary}
Consider a rigid body with moment of inertia tensor $\mathbb{J} = I$ and attitude $R \in \Omega_{(R_a, \hat{\Omega}_a), (R_b, \hat{\Omega}_b)}^{[a, b]}(\SO(3))$. Then, $R$ is a modified Riemannian cubic if and only if $\hat{\Omega} := R^T \dot{R}$ and $H := R^T R_0$ solve:
\small{\begin{align*}
    \dot{H} &= -\hat{\Omega} H, \\
    \dddot{\Omega} + \Omega \times \ddot{\Omega}  &= \frac{2f'(\phi(H)^2)\phi(H)}{\sin(\phi(H))} (H - H^T)^\vee,
\end{align*}}
\normalsize on $[a, b]$, together with the boundary conditions $\Omega(a) = R_a^T \hat{\Omega}_a, \ \Omega(b) = R_b^T \hat{\Omega}_b$ and $H(a) = R_a^T R_0, \ H(b) = R_b^T R_0$, and as long as $\tr(H) \ne -1$.
\end{corollary}
\end{example}